\newtheorem{theorem}{Theorem}
\newtheorem{lemma}{Lemma}
\newtheorem{proposition}{Proposition}
\newenvironment{proof}[1][Proof]{\noindent\textbf{#1.} }{\ \rule{0.5em}{0.5em}}
\newcommand{\comments}[1]{{\color{blue}\textit{$\#$ #1}}}
\newcommand{\MNL}{\textsc{MNL}}
\newcommand{\transpose}{{\mbox{\tiny T}}}
\newcommand{\obx}{\overline{\textbf{x}}}
\newcommand{\oby}{\overline{\textbf{y}}}
\newcommand{\obz}{\overline{\textbf{z}}}
\newcommand{\cG}{{\mathcal{G}}}
\newcommand{\cV}{{\mathcal{V}}}
\newcommand{\cX}{{\mathcal{X}}}
\newcommand{\cN}{{\mathcal{N}}}
\newcommand{\cT}{{\mathcal{T}}}
\newcommand{\cC}{{\mathcal{C}}}
\newcommand{\cO}{{\mathcal{O}}}
\newcommand{\bW}{\textbf{W}}
\newcommand{\bx}{\textbf{x}}
\newcommand{\by}{\textbf{y}}
\newcommand{\ba}{\textbf{a}}
\newcommand{\bz}{\textbf{z}}
\newcommand{\btheta}{\pmb{\theta}}
\newcommand{\bbR}{\mathbb{R}}
\newcommand{\CSE}{[\textsc{C-Separated}]\xspace}
\newcommand{\CSH}{[\textsc{C-Sharing}]\xspace}
\newcommand{\CNL}{\textsc{\tiny CNL}}
\newcommand{\NL}{\textsc{\tiny NL}}
\newif\ifnotes\notestrue
\def\tt#1{\textbf{\texttt{#1}}}
\def\htien#1{}
\begin{document}







\newcolumntype{C}{>{\centering\arraybackslash}p{4em}}

\title{\textbf{Competitive Facility Location under Cross-Nested Logit Customer Choice Model: Hardness and Exact Approaches}}
\author[1,2]{Ba Luat Le}
\author[2]{Tien Mai}
\author[1,*]{Thuy Anh Ta}
\author[3,4]{Minh Hoang Ha}
\author[3,4]{Duc Minh Vu}
\affil[1]{\it\small
ORLab, Faculty of Computer Science, Phenikaa University, Ha Dong, Hanoi, VietNam}
\affil[2]{\it\small
School of Computing and Information Systems, Singapore Management University, 80 Stamford Rd, Singapore 178902}
\affil[3]{\it\small
Faculty of Data Science and Artificial Intelligence, NEU College of Technology, Hai Ba Trung, Hanoi, VietNam}
\affil[4]{\it\small
ORLab-SLSCM, National Economics University,  Hai Ba Trung, Hanoi, VietNam}
\affil[*]{\it\small
Corresponding author, anh.tathuy@phenikaa-uni.edu.vn
}

\maketitle


\begin{abstract}
We study the competitive facility location problem, where a firm aims to establish opening facilities in a market already occupied by competitors. In this problem, customer behavior is a crucial factor in making optimal location decisions. We explore a general class of customer choice models, known as the cross-nested logit model, which is recognized for its flexibility and generality in predicting people's choice behavior. 
To explore the problem, we first demonstrate that it is \texttt{NP-Hard}, even when there is only one customer class. We further show that this hardness result is tight, as the facility location problem under any simpler choice models (such as the logit or nested logit) is polynomial-time solvable when there is one customer class.

To tackle the resulting facility location problem, we demonstrate that the objective function under a general cross-nested structure is not concave. Interestingly, we show that by a change of variables, the objective function can be converted to a convex program (i.e., a maximization problem with a concave objective and convex constraints), enabling it to be solved to optimality via an outer-approximation algorithm. Extensive experiments show the efficiency of our approach and provide analyses on the benefits of using the cross-nested model in the facility location context.

\end{abstract}

{\bf Keywords:}  
Competitive facility location, cross-nested logit, hardness, outer-approximation, convexification.

\section{Introduction}

Facility location has long been an important problem in decision-making for modern transportation and logistics systems. Traditionally, this problem involves selecting a subset of potential locations from a given pool of candidates and deciding on the financial resources allocated to establish opening facilities at these chosen locations. The objective is to either maximize profit (such as expected customer demand or revenue) or minimize costs (such as operational or transportation expenses). Customer demand plays a crucial role in these decisions, making it a key factor in facility location problems. In this work, we focus on a specific category of competitive facility location problems, where customer demand is characterized and forecasted using a random utility maximization (RUM) model \citep{train2009discrete,BenaHans02,mai2020multicut}. In this setting, it is assumed that customers make their choices among available facilities by maximizing their utilities associated with each facility. These utilities typically depend on attributes (or features) of the facilities, such as service quality, infrastructure, or transportation costs, or characteristics of customers, such as age, income, and gender. The application of the RUM model framework in this context is well justified by the popularity and proven success of RUM models in modeling and forecasting human choice behavior in transportation-related applications \citep{Mcfadden2001economicNobel,BenABier99a}.

In the context of competitive facility location under RUM models, prior work primarily utilizes the classical multinomial logit (MNL) model, which is one of the most popular choice models in demand modeling. Although this model is convenient to use due to its simple structure, it is limited by its Independence of Irrelevant Alternatives (IIA) property, implying that the ratio between the choice probabilities of any two facilities is independent of any other facilities. However, this property often does not hold in practice, limiting the application of the RUM framework in accurately modeling customer behavior.
Let us give an example to illustrate why the IIA property may not hold in the context of facility location. Suppose there are three facilities: A, B, and C, where A is located in a downtown area, and B and C are the same area and are outside of the downtown area. If Facility C is closed, the demand would shift to the remaining two facilities. It is intuitive to see that this closing may lower the attractiveness of the not-downtown area (as now there is only one facility in that area), thus affecting the attractiveness of Facility A differently than that of Facility B (as B and C are in the same area), implying that the IIA property does not hold. 

Several papers address this limitation by considering more advanced choice models, such as the nested logit (NL) model, the Generalized Extreme Value (GEV) model, or the mixed-logit model. The NL model, for instance, groups locations into different disjoint subsets, partially relaxing the IIA property. However, the IIA property still holds for alternatives from different nests. The work in \citep{dam2022submodularity,dam2023robust} considers a facility location problem under a generalized class of models that encompasses almost every RUM model in the literature. However, this work only offers heuristic methods that cannot guarantee optimal solutions. Moreover, while the mixed-logit model is known for its generality and has been considered in some prior studies \citep{FLO_Hasse2009MIP,haase2014comparison}, it often requires a large number of samples to accurately approximate the choice probabilities. This results in large facility location problem instances that are expensive to solve.

In this paper, we study a competitive facility location problem under the cross-nested logit (CNL) model, which is known to be one of the most flexible and general models within the RUM family. This model generalizes the NL model by allowing choice alternatives to belong to multiple subsets (or nests) that are not necessarily disjoint. By fully relaxing the IIA property, the CNL model can approximate any RUM model arbitrarily closely \citep{fosgerau2013choice,bierlaire2006theoretical}. To the best of our knowledge, this is the first time this general model is specifically considered in the context of competitive facility location. Consequently, there are no existing methods capable of solving the facility location problem under the CNL to optimality. We address this research gap in this paper. 

\noindent \textbf{Contributions:}  Specifically, we make the following contributions:

\begin{itemize}
\item[(i)] \textbf{{Problem Formulation and Hardness Results.}} We first formulate the competitive facility location problem and examine its complexity. We theoretically show that the facility location problem under the CNL model is \texttt{NP-hard} even when there is only one customer class (i.e., one demand point) and the cross-nested structure comprises only two nests. We also demonstrate that this hardness result is tight, in the sense that the facility location problem with one customer class under any simpler choice models (such as the NL or MNL models) is solvable in polynomial time. To the best of our knowledge, this is the first time this hardness has been rigorously explored in the context of competitive facility location under RUM models.
\item[(ii)]\textbf{{Convexity and Exact Solution Methods.}} To explore solution methods for solving the proposed problem, we consider two configurations of the cross-nested structure. The first setting, named \CSE, refers to the scenario where competitors' facilities are separated from opening facilities within the cross-nested structure. The second setting, called \CSH, involves a configuration where competitors share a common cross-nested structure with the opening facilities. The rationale behind this consideration is that while \CSE is typically employed in prior work involving facility location under the NL or GEV models \citep{dam2022submodularity,mendez2023follower}, we propose the \CSH setting and argue that it is more general and exhibits a more flexible correlation structure between facility utilities. 

We then demonstrate that while the facility location problem under \CSE has a concave objective function, making standard outer-approximation methods applicable for solving it to optimality, the objective function under \CSH is not concave. Interestingly (and surprisingly), we show that by a change of variables, the problem under \CSH can be reformulated as a mixed-integer convex program (i.e., a maximization problem with a concave objective function and convex constraints). This reformulation allows us to solve the originally non-convex problem exactly using outer-approximation methods. We then show how outer-approximation and submodular cuts can be utilized in cutting plane (CP) or Branch-and-Cut (B\&C) methods to efficiently solve the problem.
\item[(iii)]\textbf{{Numerical Analysis.} }We conduct extensive experiments using popular benchmark instances from the literature to assess the performance of our methods. Our results indicate that our methods (i.e., the convex reformulations solved by CP or B\&C methods) are capable of solving almost all the instances under consideration to optimality and outperform other standard baselines. We also provide experimental analysis on the impact of different cross-nested parameters on the performance of our algorithms, as well as the benefits of using the CNL model compared to the NL or MNL models in the context of competitive facility location. 
  \end{itemize}

\noindent \textbf{Paper Outline:} The paper is organized as follows. Section \ref{sec:review} provides a literature review. Section \ref{sec:formulation and hardness} discusses the problem formulation and Section \ref{sec:hardness} discusses some hardness results. 
Section \ref{sec:methods} presents our solution methods. Section \ref{sec:experiments} presents our experimental results, and finally, Section \ref{sec:conclusion} concludes the paper. Additional experiments are included in the appendix.

\noindent
\textbf{Notation:}
Boldface characters represent matrices (or vectors), and $a_i$ denotes the $i$-th element of vector $\ba$ is it is indexable. We use $[m]$, for any $m\in \mathbb{N}$, to denote the set $\{1,\ldots,m\}$.

\section{Literature Review}\label{sec:review}

In the context of the competitive facility location (CFL) under RUM models, most existing studies employ the MNL model to capture customer demand. For instance, \cite{BenaHans02} appears to be the first to propose the CFL under the MNL model. Their solution method involves a Mixed-Integer Linear Programming (MILP) approach based on a branch-and-bound procedure for small instances and a simple variable neighborhood search for larger instances. Subsequently, alternative MILP models were introduced by \cite{Zhang2012} and \cite{Haase2009}. \cite{haase2014comparison} benchmark these MILP models and concluded that the formulation by \cite{Haase2009} shows the best performance. \cite{Freire2015} strengthened the MILP formulation of \cite{Haase2009} using a branch-and-bound algorithm with tight inequalities. \cite{Ljubic2018outer} proposed a Branch-and-Cut method that integrates outer-approximation and submodular cuts, while \cite{mai2020multicut} developed a multicut outer-approximation algorithm for efficiently solving large instances. In this approach, outer-approximation cuts are generated for each group of demand points rather than for individual demand points.

It is important to note that the MNL model is limited by its IIA property, which implies that the ratio of the probabilities of choosing two facilities is independent of any other alternative. This property does not hold in many practical contexts \citep{train2009discrete,McFaTrai00}.  Efforts have been made to overcome this limitation, including the Mixed Logit Model (MMNL) \citep{McFaTrai00}, models in the Generative Extreme Value (GEV) family such as the Nested Logit \citep{BenA73,BenALerm85}, and the Cross-Nested Logit \citep{VovsBekh98}. 
In the context of CFL, there are also a couple of studies investigating the CFL under the MMNL model \citep{Haase2009,haase2014comparison} with a note that the use of the MMNL often requires large samples to approximate the objective function, leading to large problem instances to solve. \cite{dam2022submodularity,dam2023robust} seem to be the first to incorporate the general GEV family into the CFL, proposing a heuristic method that outperforms existing exact methods. \cite{mendez2023follower} explicitly study a CFL problem under the NL model and propose exact methods based on outer-approximation and submodular cuts incorporated into a B\&C procedure. Besides the fact that NL only partially relaxes the IIA property (i.e., the IIA property still holds across nests), the work of \cite{mendez2023follower} relies on the \CSE configuration (i.e. competitor's facilities are separated with a new one within the cross-nested structure) which is limited in capturing the correlation between competitors' and opening facilities. Our work builds upon the CNL model, which is able to fully relax the IIA property, allowing for a more flexible correlation structure between choice alternatives. Moreover, we explicitly consider the \CSH configurations, which are, as shown later, much more general and adequate than the \CSE. Our solution methods also guarantee optimal solutions, making a significant advancement over the prior heuristics and exact methods  \citep{dam2023robust,dam2022submodularity, mendez2023follower}.

It is worth mentioning that our work is related to a body of research on competitive facility location where customer behavior is modeled using gravity models \citep{DREZNER2002, ABOOLIAN2007a, ABOOLIAN2007b, ABOOLIAN2021}. These models, while based on a different type of customer choice models, share a similar objective structure with the CFL problem under the MNL (i.e., the objective function is a sum of linear fractions). There is also a line of work considering CFL with a general RUM model, with solution methods based on sample average approximation (SAA) \citep{lamontagne2023optimising, legault2024model}. While these approaches are general, the approximation would require a large number of samples to achieve acceptable approximation errors, leading to large problem instances to solve.

Regarding the CNL customer choice model, the model was mostly introduced explored in the transportation research community \citep{small1987discrete, vovsha1997application, ben1999discrete}. \citet{bierlaire2006theoretical} provided a theoretical foundation for the CNL model, formally proving its inclusion in the GEV family \citep{McFa78} and introducing a novel estimation procedure based on non-linear programming, as opposed to the heuristics used in previous studies. This work also established that the MNL and NL models are special cases of the CNL model. 
It is well-known that the CNL model is fully flexible, in the sense that it can approximate arbitrarily closely any RUM model  \citep{fosgerau2013choice}, as well as the general ranking preference model \citep{aouad2018approximability, LeC2024POM}.
Thanks to its flexible structure, the CNL model has been successfully applied to a wide range of transportation problems. For example, applications of the CNL model can be found in mode choice problems \citep{yang2013cross}, departure time choice \citep{ding2015cross}, route choice and revenue  management\citep{lai2015specification,mai2016method,mai2017dynamic}, air travel management \citep{drabas2013modelling}, and more recently, location choice in international migration \citep{beine2021new}. 

It has also been observed that the CNL model consistently outperforms other choice models, such as MNL and NL, in demand modeling. For instance, \cite{beine2021new} used migration aspiration data from India and demonstrated that the CNL outperforms competing approaches (i.e., the MNL, NL, and MMNL) in terms of quality of fit and predictive power, highlighting stronger heterogeneity in responses to shocks and revealing complex and intuitive substitution patterns. Similarly, \cite{ding2015cross} used revealed preference data collected from Maryland-Washington, DC, and showed that a CNL structure offers significant improvements over MNL and NL models.

Despite its success in transportation research, the potential of the CNL model in decision-making remains largely unexplored. To the best of our knowledge, this study is the first to employ the CNL model to capture substitution behavior in facility location. Our work is closely related to the work of \cite{LeC2024POM}, which examines an assortment optimization problem under the CNL model with an objective function that shares some similarities with the one considered in this paper. In \cite{LeC2024POM}, the authors propose a near-optimal approach based on piecewise linear approximation that only guarantees a near-optimal solution, with the size of the approximate problem being proportional to the solution precision. Consequently, one would theoretically need to solve instances of infinite size to guarantee an optimal solution.



\section{Competitive Facility Location under CNL Model}\label{sec:formulation and hardness}

In this section, we present the problem formulation of the competitive facility location problem under the CNL model. We then discuss the CNL model's ability to capture complex correlation structures between choice alternatives, highlighting its advantages over other choice models considered in the literature (i.e., the MNL and NL models).

\subsection{Problem Formulation}
In the classic facility location problem, decision-makers aim to establish opening facilities in a way that optimizes the demand fulfilled by customers. However, accurately assessing customer demand in real-world scenarios is challenging and inherently uncertain. In this work, we explore a facility location problem where discrete choice models \citep{train2009discrete} are used to estimate and predict customer demand. Among the various approaches discussed in the demand modeling literature, the RUM framework \citep{train2009discrete} stands out as the most prevalent method for modeling discrete choice behaviors. This method is grounded in random utility theory, positing that a decision-maker's preference for an option is represented through a random utility. Consequently, the customer tends to choose the alternative offering the highest utility. According to the RUM framework \citep{McFa78,FosgBier09}, the likelihood of individual \(t\)  choosing an option {\(i\) from a given choice set $S$} is determined by \(P(u_{ti} \geq u_{tj}, \; \forall j \in S)\), implying that the individual will select the option providing the highest utility. Here, the random utilities are typically defined as \(u_{ti} = v_{ti} + \epsilon_{ti}\), where \(v_{ti}\) represents the deterministic component, which can be calculated based on the characteristics of the alternative and/or the decision-maker and some parameters to be estimated, and \(\epsilon_{ti}\) represents random components that are unknown to the analyst. Under the popular MNL model, the probability that a facility located at position \(i\) is chosen by an individual \(t\) is computed as 
$$P_t(i|S) = \frac{e^{v_{ti}}}{\sum_{i \in S} e^{v_{ti}}},$$ where \(S\) is a set of available facilities.

In this study, we consider a competitive facility location problem where a ``newcomer" company plans to enter a market already captured by a competitor. For example, in the context of retail chains, it is a scenario where a new supermarket chain plans to enter a market dominated by established players. The new entrant aims to strategically locate its stores to attract customers away from existing supermarkets. The main objective is to secure a portion of the market share by attracting customers to their newly opened facilities. 
The company's strategy revolves around selecting an optimal set of locations for its opening facilities to maximize the anticipated customer demand.

In this work, we consider the CNL model, which is known to be one of the most flexible and general RUM models in the literature. To formulate the problem, let \( [m] = \{1, 2, \ldots, m\} \) be the set of all available locations, and let \( [T] = \{1,\ldots,T\}\) be the set of customer types, which can be defined based on customers' geographical locations or characteristics such as age, income, and gender. Also, let \( \mathcal{C} \) be the set of competitors' existing facilities. Under the CNL model, the set of locations can be assigned to different subsets (or nests). For each customer type $t\in [T]$, we assume that \( [m] \cup \mathcal{C} \) can be assigned to \( N \) subsets (or nests) \( \cN^t_1, \cN^t_2, \ldots, \cN^t_N\)  according to their attributes/characteristics. Note that \( \cN^t_1, \cN^t_2, \ldots, \cN^t_N \) are not necessarily disjoint, i.e., a facility/location \( i \) can belong to multiple nests. We also use a non-negative quantity \( \alpha^t_{in} \) to capture the level of membership of location \( i \) in nest \( \cN^t_n \), where \( \sum_{n \in [N]} \alpha^t_{in} = 1 \) for all \( i \in [m] \). Without loss of generality, we can assume that \( \cN^t_n = [m]\cup \cC \) for all \( n \in [N] \), and if location \( i \) does not belong to \(\cN^t_n \), we can set \( \alpha^t_{in} = 0 \) without affecting the previous assumption. Let $v_{ti}$ be the deterministic utility associated with location $i\in [m]\cup\cC$  and customer type $t\in [T]$. Such a utility function can be specified as a function of customers' and locations' characteristics, with parameters that can be inferred by estimating the choice model \citep{train2009discrete}. In this work, we assume that \( v_{ti} \) are given for all $t\in [T], i\in [m]$. 

In Figure \ref{fig:sample-figure}, we provide an illustration of a cross-nested correlation structure for five facilities (both new and competitors' facilities). Due to their characteristics, these facilities can be grouped into different subsets (or nests). For instance, Facility 1 and Facility 3 can be grouped into a nest representing the ``Downtown Area'', as they are both located in the downtown area. Additionally, Facility 2 and Facility 3 can be assigned to Nest 3, representing the availability of public transport. It can be seen that such nests should not be disjoint, as each facility can belong to different categories.  
\begin{figure}[htb] 
    \centering
    \includegraphics[width=\textwidth]{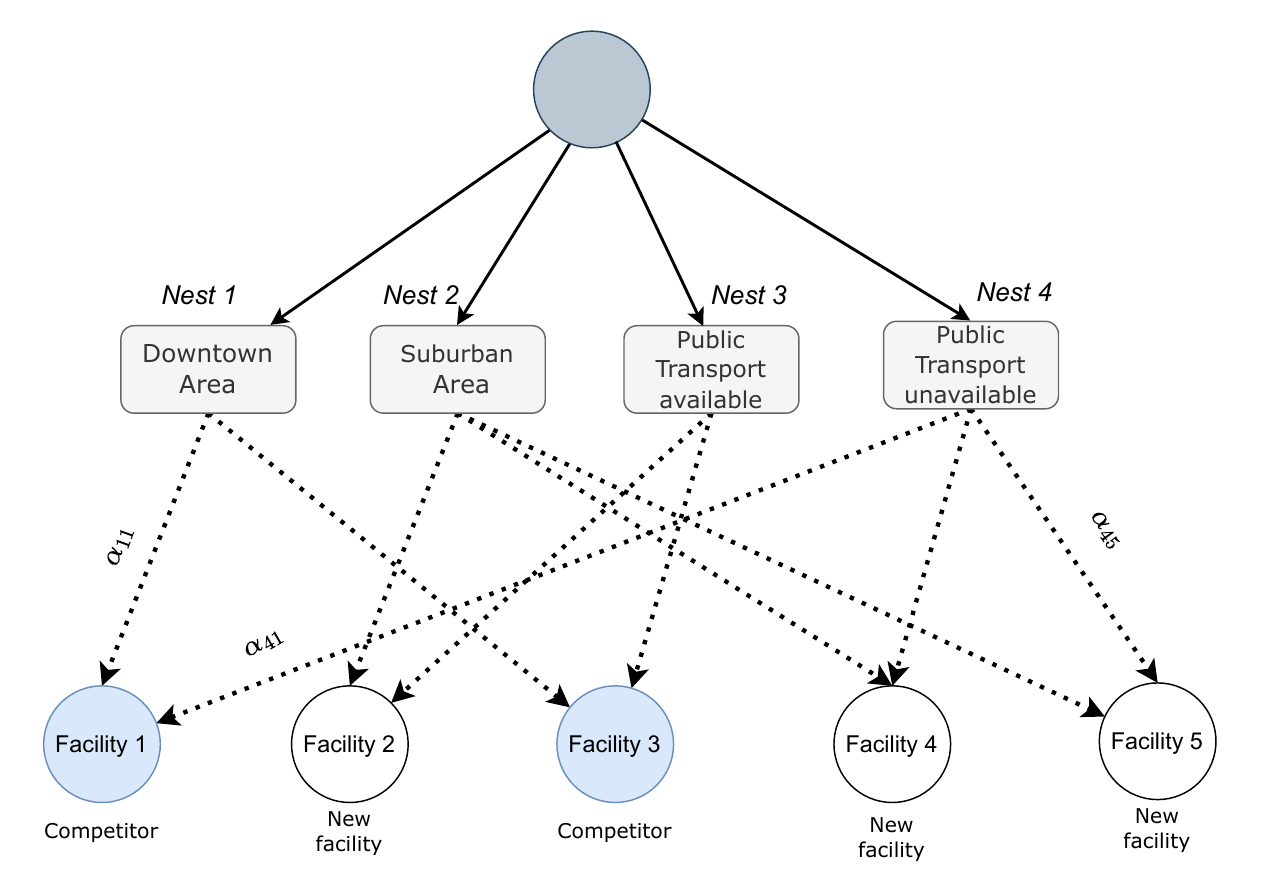} 
    \caption{An example of a cross-nested structure in facility location.}
    \label{fig:sample-figure} 
\end{figure}

The choice process in the CNL model can viewed as a two-stage process where a customer would first select a nest, and then select a location/facility within this nest. {Consider a set of available locations $S\subseteq [m]$,} the probability that a customer would select a nest $\cN^t_n$ , for any $n\in [N]$ can be computed as
\[
P(\cN^t_n|S) = \frac{W_{tn}^{\sigma_{tn}}}{ \sum_{n'\in[N]}W_{tn'}^{\sigma_{tn'}}}, 
\]
where $W_{tn} = \sum_{i\in S \cap ([m]\cup \cC)}\alpha^t_{in}e^{v_{ti}/\sigma_{tn}}$ is the total preference utilities of all alternatives in {$\cN^t_n \cap S$}, $\sigma_{tn}$ is the dissimilarity parameter of nest $\cN^t_n$. It is typically assumed that the value of $\sigma_{tn}$ varies in the unit interval for all nests to guarantee that the model is consistent with the RUM framework \citep{McFa78,bierlaire2006theoretical}. 

In the second stage, the customer decides to select a location/facility $i\in S$ from the chosen nest $\cN^t_n$ with probabilities:
\[
P(i|\cN^t_n) = \frac{\alpha^t_{in}e^{v_{ti}/\sigma_{tn}}}{W_{tn}}, \forall i\in S
\]
For ease of notation, let $V_{tin} = e^{v_{ti}/\sigma_{tn}}$. Then, the probability that a customer of type $t\in [T]$ select a facility in $S$ can be computed as:
\begin{align*}
		P^t(i|S) = \sum_{n\in [N]}P(\cN^t_n|S) P(i|\cN^t_n) &= \sum_{n\in[N]}\frac{W_{tn}^{\sigma_{tn}}}{ \sum_{n'\in[N]}W_{tn'}^{\sigma_{tn'}}}\times \frac{\alpha^t_{in}V_{tin}}{W_{tn}}\\
		&=\frac{\sum_{n\in [N]}{W_{tn}}^{\sigma_{tn}-1} (\alpha^t_{in}V_{tin})}{ \sum_{n\in [N]} W_{n}^{ \sigma_{tn}}},~\forall i\in S.
	\end{align*}
Now, by the law of total expectation, the expected captured market share given  by the selected locations $S$ can be computed as 
\begin{align}
    F(S) = \sum_{t\in [T]} q_t\frac{\sum_{n\in [N]}{W_{tn}}^{\sigma_{tn}-1} (\sum_{i\in S}\alpha^t_{in}V_{tin})}{ \sum_{n\in [N]} W_{tn}^{ \sigma_{tn}}},\nonumber
\end{align}
where $q_t$ is the total demand of customers of type $t$. With the objective of maximizing total expected demand, the competitive facility location  problem under the CNL model can be formulated as:
\[
\max_{S:|S|\leq r} F(S).
\]
Given this objective, the problem is also referred to as the Maximum Capture Problem (MCP) \cite{BenaHans02}.
It is also convenient to formulate the problem as a binary nonlinear program. To this end, let $\bx = (x_1, x_2, \ldots, x_m) \in \{0, 1\}^m$ be a binary vector representing a location choice  decision where $x_i = 1$ if, and only if, the location $i$ is chosen. We can formulate the MCP as the following  binary nonlinear program:
	\begin{align}
 		\max_{\bx}\qquad &\left\{F(\bx)  = \sum_{t\in [T]}q_t\frac{\sum_{n\in [N]}{W_{tn}}^{\sigma_{tn}-1} (\sum_{i\in [m]}\alpha^t_{in}V_{tin} x_i)}{ \sum_{n\in [N]} W_{tn}^{ \sigma_{tn}}}\right\}\label{prob:CNL-MCP}\tag{\sf MCP-CNL} \\
        \text{subject to} \quad  & \sum_{i\in [m]} x_i \leq r\nonumber\\ 
        & \bx \in \{0,1\}^m\nonumber
	\end{align}
 where $W_{tn} =\sum_{i\in [m]}\alpha^t_{in}x_iV_{tin} + \sum_{i\in \cC}\alpha^t_{in}V_{tin}, \forall{t\in [T], n \in [N]}.$
It can be observed that the objective function \( F(\mathbf{x}) \) is highly nonlinear and not necessarily concave in \(\mathbf{x}\) (or no one has proved that it is concave), making exact approaches for mixed-integer nonlinear programs, such as MILP, conic reformulations \citep{Sen2017}, or outer-approximation \citep{duran1986outer,mai2020multicut}, inapplicable. To the best of our knowledge, there are no exact methods capable of solving \eqref{prob:CNL-MCP} to optimality. We will present our solution in the next section. However, before this, let us delve into the correlation structure of the CNL model and discuss some NP-hardness results.

\subsection{Cross-nested Correlation Structure}

Previous studies examining the MCP under the NL or GEV models are limited by the assumption that competitors' facilities do not share the same nests as the opening facilities \citep{dam2022submodularity,dam2023robust,mendez2023follower}. Specifically, prior formulations assume that nests are constructed to contain only opening facilities. This is a significant limitation in a modeling point of view because, under this setting, competitors' facilities are uncorrelated (in terms of utility) with the opening facilities. Furthermore, as will be shown later, this setting also leads to simpler problems to solve. We delve into this issue in this subsection. 

First, to facilitate our exposition, let us define the following two settings:

\begin{itemize}
    \item \textbf{Competitors' facilities are separated in the nested structure } (or \CSE for short): This refers to the setting where no facilities from the competitor share the same nest with any new facility.
    \item \textbf{Competitors' facilities are shared in the nested structure } (or \CSH for short): This refers to the setting where the competitor and opening facilities can belong to the same nests.
\end{itemize}

In Figure \ref{fig:CSE-CSH}, we provide an illustration of the two configurations, \CSE and \CSH. The \CSH configuration is more general, allowing new and competitors' facilities to belong to any common nests. In contrast, the \CSE configuration only permits new and competitors' facilities to belong to separate nests. 
This distinction makes \CSE highly limited from a modeling perspective, as it restricts the ability to accurately represent the relationships between available facilities in the market. By enforcing strict separation of facilities into different nests, \CSE fails to capture the nuanced interactions and overlaps that exist in real-world scenarios. On the other hand, \CSH's flexibility in assigning facilities to multiple nests enables a more comprehensive and realistic representation of market relationships.

  \begin{figure}[htb] 
    \centering
    \includegraphics[page=1,width=\textwidth]{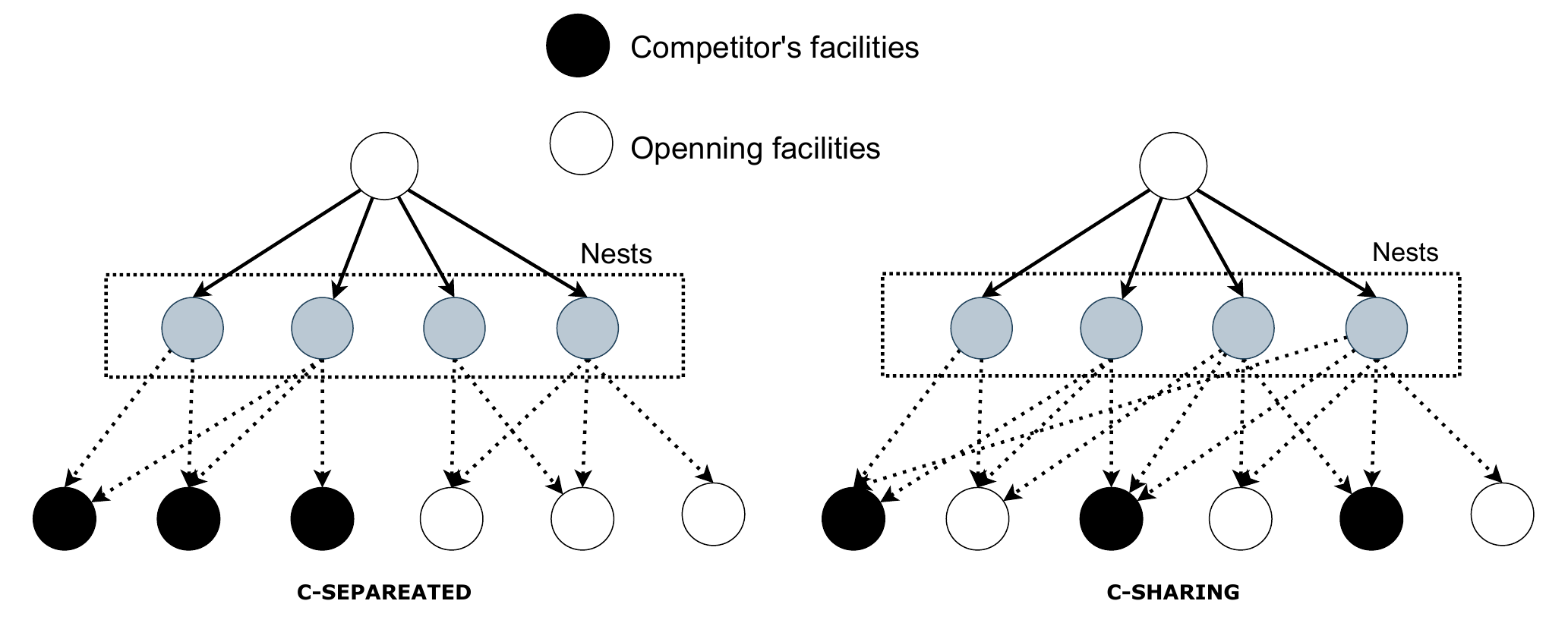} 
    \caption{Illustrations of \CSE and \CSH.
    \label{fig:CSE-CSH}} 
\end{figure}
To understand why \CSE is significantly limited compared to \CSH, let us examine the correlation structure provided by the two settings. Under the CNL model, it is known that the correlation between two utilities can be approximated as \citep{papola2004some, abbe2007normalization}:
\[
\widehat{\textsf{Corr}}^{\CNL}(U_{ti}, U_{tj}) = \sum_{n \in [N]} \sqrt{\alpha^t_{in} \alpha^t_{jn}} (1 - \sigma^2_{tn}),
\]
where \(U_{ti}\) is the random utility of facility/location \(i \in [m] \cup \mathcal{C}\). This formulation implies that any pair of locations/facilities can be correlated in utility, as long as there is a nest covering them. This leads to the observation that, under the \CSE setting, \(\widehat{\textsf{Corr}}^{\CNL}(U_{ti}, U_{tj}) = 0\) for any new facility \(i \in [m]\) and competitor's facility \(j \in \mathcal{C}\). This would pose a modeling issue as a competitor's facility \(j\) can be highly correlated in utility with a new facility due to, for instance, geographical proximity. For example, if a competitor's facility is close to a new facility, their utilities may be highly and positively correlated. To illustrate this, consider a scenario where \(i\) and \(j\) are nearby. Due to a reason such as the construction of a new road or parking area, the utilities of both \(i\) and \(j\) would significantly increase,  implying that the correlation (in utility) between facility/location \(i\) and \(j\) would be even stronger than between \(i\) and any new facility in different areas.

It is also worth mentioning the correlations in other choice models such as the MMNL and NL models. Due to the IIA property, the MNL model has a diagonal variance-covariance matrix, meaning product utilities are uncorrelated, i.e., 
\[
\textsf{Corr}^{\textsc{MNL}}(U_{ti}, U_{tj}) = 0,~\forall i,j\in [m], i\neq j.
\]
In the NL model, only products belonging to the same nest are correlated, and the correlation between product utilities is given by \citep{train2009discrete, abbe2007normalization}:
\[
\textsf{Corr}^{\textsc{NL}}(U_{ti}, U_{tj}) = (1 - \sigma^2_{tn}) \delta_{tn}(i, j),
\]
where \(\delta_{tn}(i, j) = 1\) if \(i\) and \(j\) are both in nest \(\mathcal{N}^t_n\), and is equal to zero otherwise. This implies that while facilities are positively correlated (in utility) with those in the same nest, they are uncorrelated with any products outside of their nest. The CNL model, which allows each product to belong to multiple nests, yields more flexible correlation structures.

We now discuss the problem formulations under the two cross-nested structural settings: \CSE and \CSH. While the objective function under the \CSH setting is given in \eqref{prob:CNL-MCP}, under the \CSE setting, the objective function of the MCP can be rewritten as:
\begin{align}
 F(\mathbf{x}) = \sum_{t \in [T]} q_t \frac{\sum_{n \in [N]} W_{tn}^{\sigma_{tn} - 1} \left( \sum_{i \in [m]} \alpha^t_{in} V_{tin} \right)}{U^c_t + \sum_{n \in [N]} W_{tn}^{\sigma_{tn}}} \label{eq:obj-CSH}
\end{align}

where \(W_{tn} = \sum_{i \in [m]} \alpha^t_{in} V_{tin}\) and \(U^c_t\) represents the sum of exponentials of the utilities of all the competitors' facilities. Here we note that the denominator can be separated into two terms: the first term \(U^c_t\) only depends on the competitors' utilities, and the second term \(\sum_{n \in [N]} W_{tn}^{\sigma_{tn}}\) only depends on the utilities of the opening facilities. This is possible because the competitors' facilities are separated in the nesting structure.

We now can further write the objective function under \CSE as:
\begin{equation}\label{eq:obj-CSE}
    F(\mathbf{x}) = \sum_{t \in [T]} q_t \frac{\sum_{n \in [N]} W_{tn}}{U^c_t + \sum_{n \in [N]} W_{tn}^{\sigma_{tn}}}
= \sum_{t \in [T]} q_t - \sum_{t \in [T]} \frac{U^c_t}{U^c_t + \sum_{n \in [N]} W_{tn}^{\sigma_{tn}}}
\end{equation}
This involves a sum of fractions, where each fraction has a constant numerator. This type of objective function is often seen in prior works \citep{BenaHans02,Ljubic2018outer,mai2020multicut,dam2022submodularity} in the context of the MCP, where the authors have proved that their objective function is concave in \(\bx\). As shown in the next section, it is also the case under the CNL model. Compared to the objective function under \CSH shown in \eqref{prob:CNL-MCP}, due to the fact that the competitors share the same nesting structure with the opening facilities, it seems not possible to convert the objective function into a sum of fractions with constant numerators. As a result, the objective in \eqref{prob:CNL-MCP} is much more challenging to handle compared to the one in \eqref{eq:obj-CSH}. In fact, as shown in the subsequent sections, the objective function in \eqref{prob:CNL-MCP} is generally not concave, making some standard approaches, such as outer-approximation methods, not directly applicable.

\section{NP-harness}\label{sec:hardness}

In this section, we discuss the difficulty of solving the MCP under the CNL model. Our main result is that the MCP-CNL is \texttt{NP-hard} even when there is only one customer type (i.e., \(T=1\)). To demonstrate the tightness of this hardness result and to provide a broader context for our findings, we first consider the MCP under the MNL and NL models, which are two specific cases of the CNL model commonly used in the literature. We show that when there is only one customer type (i.e., \(T=1\)), the MCP under the MNL or NL models is solvable in polynomial time.

The following proposition demonstrates that the MCP is polynomial-time solvable under the MNL model when there is only one customer type but becomes NP-hard when \(T \geq 2\). To prove these results, for \(T=1\), we show that the MCP is equivalent to solving a linear program with a cardinality constraint, which can be efficiently solved using a simple sorting algorithm with a runtime of \(\mathcal{O}(m \log m)\). For the case of \(T=2\), we connect the problem to the \textit{Set Partition problem}, which is known to be \texttt{NP-hard}.

\begin{proposition}\label{prop:NPhard-MNL}
The MCP under the MNL model can be solved in \(\mathcal{O}(m \log m)\) if  $T=1$, and is \texttt{NP-hard} if $T\geq 2$.
\end{proposition}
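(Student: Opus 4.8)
The statement has two independent halves, and I would attack them by entirely different arguments.

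\textbf{The case $T=1$ (polynomial).} Writing $V_{1i}=e^{v_{1i}}>0$ and $Y=\sum_{i\in[m]}V_{1i}x_i$, the objective collapses to the single fraction $F(\bx)=q_1\,Y/(U^c_1+Y)$. The map $y\mapsto y/(U^c_1+y)$ is strictly increasing on $[0,\infty)$ whenever $U^c_1>0$ (and $F$ is constant when $U^c_1=0$, a trivial case), so maximizing $F$ is equivalent to maximizing the \emph{linear} function $Y=\sum_i V_{1i}x_i$ subject to the cardinality constraint $\sum_i x_i\le r$. Since every coefficient $V_{1i}$ is strictly positive, an optimal set simply consists of the $r$ locations with the largest $V_{1i}$, which I obtain by sorting the $V_{1i}$ in $\mathcal{O}(m\log m)$ time. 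I would record this monotonicity-plus-greedy argument as the polynomial half.

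\textbf{The case $T\ge 2$ (hardness).} Here I would give a polynomial-time (Karp) reduction from an NP-hard partition problem into an instance with $T=2$. The guiding idea is that each term $y\mapsto y/(U+y)$ is strictly \emph{concave} and increasing, so a sum $g(Y_1)+g(Y_2)$ with a \emph{fixed total} $Y_1+Y_2$ is maximized exactly at the balance point $Y_1=Y_2$. The plan is to engineer the two customer types so that this balance point corresponds to a valid equal-split partition. Concretely, given a partition instance on $2k$ integers $a_1,\dots,a_{2k}$ summing to $2B$, I set $m=2k$, $q_1=q_2=1$, $U^c_1=U^c_2=U$, cardinality $r=k$, and define $V_{1i}=a_i$ together with a \emph{constant-sum} partner $V_{2i}=c-a_i$ for a suitable $c$ making all utilities positive. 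The point of the constant-sum choice is that for any selection of exactly $k$ locations one gets $Y_1+Y_2=\sum_{i\in S}(V_{1i}+V_{2i})=ck$, a constant; hence $Y_2=ck-Y_1$ and the objective becomes a strictly concave function of $Y_1$ alone, peaking at $Y_1=ck/2$. Choosing $c$ so that $ck/2$ equals the half-sum target, a feasible set attains the peak value $\tau=2\,\tfrac{B}{U+B}$ if and only if there is a size-$k$ subset with $\sum_{i\in S}a_i=B$, i.e.\ a valid equal-cardinality partition. I would then argue the threshold test: a YES instance realizes $\tau$ exactly, while a NO instance stays strictly below $\tau$, with a quantifiable gap coming from the integrality of the $a_i$ (so the nearest achievable $Y_1$ misses $B$ by at least $1$).

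\textbf{Where the work lies.} The main obstacle is that the max-capture objective is \emph{monotone increasing} in the selected set for every type, so on its own it offers no trade-off to encode a partition; the constant-sum gadget $V_{1i}+V_{2i}=c$ combined with a binding cardinality bound is precisely what manufactures the needed ``complementary'' decreasing term $Y_2=rc-Y_1$, and this is the step I expect to require the most care. Two technical points must be discharged alongside it: first, verifying that an optimal feasible solution indeed uses the full budget $|S|=r$ (so that $Y_2=rc-Y_1$ holds at optimality), which follows because the size-$j$ optimum is bounded by $2\,g(jc/2)$, increasing in $j$; and second, keeping all constructed utilities strictly positive while still placing the concave peak exactly at the partition target $B$ inside the feasible range of $Y_1$ — a tension I would resolve by a preliminary uniform shift $a_i\mapsto a_i+L$ of the instance, which leaves size-$k$ subset sums unchanged (up to the constant $kL$) but lets me push $c$ above $\max_i a_i$. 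Since the natural source problem already carries a cardinality restriction, I would reduce from the equal-cardinality \textsc{Partition} variant, noting it is NP-hard by a standard padding reduction from ordinary \textsc{Partition}.
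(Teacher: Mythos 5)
Your proposal is correct, and both halves rest on the same core ideas as the paper's proof: for $T=1$, the identical monotonicity-plus-sorting argument; for $T\geq 2$, the identical gadget of complementary utilities with constant sum $V_{1i}+V_{2i}=c$, so that under a binding cardinality constraint the objective collapses to a univariate strictly concave function of the type-1 subset sum, peaked exactly at the balanced split. Where you differ is in the reduction plumbing. The paper reduces from ordinary \textsc{Set Partition} and, because that problem does not fix the cardinality of the split, it constructs $m$ MCP instances (one for each $r\in[m]$, with asymmetric parameters $U^c_1 = r + r\max_j a_j$, $U^c_2 = 2K$, $q_t = 1/U^c_t$) and checks whether any of the $m$ optimal solutions has subset sum exactly $K$ --- a Turing-style reduction with $m$ oracle calls. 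You instead fix $r=k$ by reducing from the equal-cardinality \textsc{Partition} variant, use symmetric parameters with a uniform shift to enforce positivity, and run a single-instance threshold test (YES iff $\mathrm{OPT}\geq\tau$), i.e.\ a Karp reduction to the decision version; the price is that you must separately justify NP-hardness of the equal-cardinality variant (your padding argument is standard and fine), and you must make the NO-instance gap explicit via integrality, which you do. Your route yields the formally cleaner many-one reduction and an explicit optimality gap; the paper's route avoids invoking the equal-cardinality variant at the cost of solving $m$ instances. One small remark: your stated lemma that the optimum ``uses the full budget'' is not actually needed --- what your bound $F \leq 2g(jc/2)$, strictly increasing in $j$, really delivers is that no set of size $j<k$ can reach the threshold $\tau$, which is exactly what the threshold test requires.
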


To the best of our knowledge, Proposition \ref{prop:NPhard-MNL} marks the first time such a hardness result has been explored in the context of the MCP under the MNL model. To situate this result within the broader context of choice-based optimization, \cite{rusmevichientong2014assortment} demonstrates that the assortment optimization problem under the MMNL is NP-hard, even with only two customer classes. It is important to note that the objective function in \cite{rusmevichientong2014assortment} is more complex than the one in \eqref{prob:CNL-MCP} under the same choice model \footnote{The objective function of the assortment problem under MMNL becomes an MCP objective function under MNL with multiple demand points when all prices are the same over products.}. This implies that the assortment problem considered in their work is more challenging than \eqref{prob:CNL-MCP} (under the same choice model and number of customer classes). Consequently, our NP-hardness result, as stated in Proposition \ref{prop:NPhard-MNL}, is stronger than the one established for the assortment problem in \cite{rusmevichientong2014assortment}.

Proposition \ref{prop:NPhard-MNL} indicates that the MCP is generally hard to solve (unless \texttt{P=NP}) when there is more than one customer type. We now examine the hardness of the problem when there is only one customer type. Under the NL model - a more general model compared to the MNL model, where the IIA property is relaxed by assigning facilities to different nests, but each facility cannot belong to more than one nest, Proposition \ref{prop:NL-poly} below shows that the MCP under the NL model is still polynomial-time solvable.

\begin{proposition}\label{prop:NL-poly}
If there is only one customer type \((T=1)\) and given any \(\epsilon > 0\), the MCP under the NL model can be solved to an \(\epsilon\)-optimal solution in \(\mathcal{O}(m^2 r^2 \log(1/\epsilon))\). In other words, a solution $\widehat{\bx}$ such that $F(\widehat{\bx}) \geq \max_{\bx\in \cX} F(\bx) -\epsilon$ can be found in \(\mathcal{O}(m^2 r^2 \log(1/\epsilon))\).
\end{proposition}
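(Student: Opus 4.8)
The plan is to exploit the fact that, when $T=1$ and the model is an NL (so the nests $\cN_1,\ldots,\cN_N$ partition $[m]\cup\cC$ and every membership coefficient is $0$ or $1$), the objective collapses to a function of only $N$ aggregate quantities. Writing $V_i=e^{v_i/\sigma_n}$ for the unique nest $n$ containing $i$, and setting $Y_n=\sum_{i\in\cN_n\cap[m]}V_ix_i$ (the captured mass of nest $n$) and $C_n=\sum_{i\in\cN_n\cap\cC}V_i$ (the fixed competitor mass), one has $W_n=Y_n+C_n$ and
\[
F(\bx)=q\,\frac{\sum_{n\in[N]}(Y_n+C_n)^{\sigma_n-1}Y_n}{\sum_{n\in[N]}(Y_n+C_n)^{\sigma_n}}.
\]
Thus the decision reduces to choosing, for each nest, how many and which facilities to open, subject to the global cardinality budget $\sum_n k_n\le r$, where $k_n$ is the number opened in $\cN_n$.

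Because $F$ is a ratio of two sums, I would attack it by fractional (parametric) programming: binary-search a scalar $\lambda$ and, for each fixed $\lambda$, solve
\[
h(\lambda)=\max_{\bx\in\cX}\ \sum_{n\in[N]}\Big[(Y_n+C_n)^{\sigma_n-1}Y_n-\lambda (Y_n+C_n)^{\sigma_n}\Big].
\]
The function $h$ is nonincreasing and its unique root is $F^*/q$, so the sign of $h(\lambda)$ tells the search which way to move, and any maximizer returned at a $\lambda$ with $h(\lambda)\ge 0$ has ratio at least $\lambda$. The point of this reformulation is that the bracketed term is separable across nests, so the inner problem becomes an allocation of the budget $r$ among the nests.

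The key step is a within-nest reduction. I would show that for every $\lambda<1$ the per-nest term $\phi_n^\lambda(Y)=(Y+C_n)^{\sigma_n-1}Y-\lambda(Y+C_n)^{\sigma_n}$ is strictly increasing in $Y$; substituting $u=Y+C_n$ and differentiating gives $\tfrac{d}{du}\phi_n^\lambda=u^{\sigma_n-2}\big(\sigma_n(1-\lambda)u+(1-\sigma_n)C_n\big)>0$ since $\sigma_n\in(0,1)$ and $\lambda<1$. Consequently, for a fixed number $k_n$ of open facilities in nest $n$, the best choice maximizes $Y_n$, i.e., takes the $k_n$ facilities with the largest $V_i$; these top-$k$ sums are obtained once by sorting each nest and taking prefix sums. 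It remains to verify that the optimal $\lambda$ stays below $1$ (which holds because $Y_n\le W_n$ forces the overall ratio $\le 1$, with equality only absent competitors), so the monotonicity is in force throughout the search.

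With the within-nest reduction in hand, $h(\lambda)$ becomes the separable resource-allocation problem $\max\{\sum_n\phi_n^\lambda(Y_n(k_n)):\sum_n k_n\le r\}$, which I would solve exactly by a dynamic program over nests with the used budget as state. Sorting is done once in $\cO(m\log m)$; each evaluation of $h$ costs the DP plus $\cO(m)$ table look-ups, and a binary search to accuracy $\epsilon$ needs $\cO(\log(1/\epsilon))$ evaluations, keeping the total within the stated polynomial bound $\cO(m^2r^2\log(1/\epsilon))$. Finally I would translate the $\lambda$-accuracy into an $F$-accuracy (using $F=q\lambda$) to conclude that the recovered $\widehat\bx$ satisfies $F(\widehat\bx)\ge\max_{\bx\in\cX}F(\bx)-\epsilon$. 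The main obstacle is the within-nest monotonicity claim together with confining $\lambda$ to the regime $\lambda<1$ where it holds; once that is secured, the separability and the dynamic program are routine.
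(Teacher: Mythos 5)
Your proposal is correct and follows essentially the same route as the paper's proof: a Dinkelbach/parametric reformulation solved by binary search over the ratio value, with each inner problem handled by a within-nest monotonicity argument (sort the $V_i$ and take the top-$k$ prefix) combined with a dynamic program over nests whose state is the remaining cardinality budget, yielding the same $\mathcal{O}(m^2r^2\log(1/\epsilon))$ bound. The only substantive difference is that the paper parametrizes the complementary fraction $\sum_{n} W_n^{\sigma_n-1}U^c_n \big/ \sum_{n} W_n^{\sigma_n}$ (the lost market share, minimized), whose per-nest terms are decreasing in $W_n$ for \emph{every} parameter value $\delta>0$, which removes the need for your additional step confining the search parameter to $\lambda<1$.
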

The proof can be achieved by converting the MCP into a sequence of subproblems using the Dinkelbach transform \citep{dinkelbach1967nonlinear}. We then demonstrate that each subproblem can be solved exactly in polynomial time using dynamic programming.
Proposition \ref{prop:NL-poly} implies that if the choice model is NL and there is only one customer type, we can solve \eqref{prob:CNL-MCP} to \(\epsilon\)-optimality in polynomial time, where the complexity is proportional to \(\log(1/\epsilon)\). One can select \(\epsilon\) sufficiently small, for example, such that \(\epsilon < F^* - \max_{\bx \in \cX, F(\bx) < F^*} F(\bx)\), where \(F^*\) is the optimal value of \eqref{prob:CNL-MCP}. Then, it is guaranteed that the procedure described in the proof of Proposition \ref{prop:NL-poly} always returns optimal solutions to \eqref{prob:CNL-MCP}.

We are now ready to state our main NP-hardness result. Theorem \ref{th:NP-hard} below shows that the MCP under the CNL model is NP-hard even when there is only  $1$ customer class and $2$ nests. The proof can be established by considering an instance of the MCP with \(\sigma_{tn} \approx 0\) for all \(n\) and connecting the MCP to the set partition problem, which is known to be NP-hard. 

\begin{theorem}\label{th:NP-hard}
The MCP under the CNL model in \eqref{prob:CNL-MCP} is NP-hard even when there is only one customer type (i.e., \(T=1\)) and $2$ nests (\(N=2\)).
\end{theorem}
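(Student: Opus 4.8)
The plan is to reduce from the \textsc{Set Partition} problem, which is \texttt{NP-hard}: given positive integers $a_1,\dots,a_k$ with $\sum_j a_j = 2B$, decide whether some subset sums to $B$. I would build a single-customer ($T=1$), two-nest ($N=2$) instance of \eqref{prob:CNL-MCP} and analyze its objective in the regime $\sigma_1=\sigma_2=\sigma\to 0^+$, as the excerpt suggests. First I would put the instance in a $\sigma$-free normal form: taking both dissimilarity parameters equal to $\sigma$ and reparametrizing the utilities as $v_i=\sigma u_i$, the quantities $V_{in}=e^{v_i/\sigma}=e^{u_i}=:\beta_i$ no longer depend on $\sigma$, so $W_n=A_n(\bx)+c_n$ is fixed given $\bx$, where $A_n(\bx)=\sum_{i\in[m]}\alpha_{in}\beta_i x_i$ is the opening mass in nest $n$ and $c_n=\sum_{i\in\cC}\alpha_{in}\beta_i$ is the fixed competitor mass. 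The only surviving $\sigma$-dependence is through $W_n^{\sigma}$ and $W_n^{\sigma-1}$, so using $W_n^{\sigma}\to 1$ and $W_n^{\sigma-1}\to W_n^{-1}$ the objective collapses to the closed form
\[
F(\bx)\ \longrightarrow\ \frac{q}{2}\left(\frac{A_1(\bx)}{A_1(\bx)+c_1}+\frac{A_2(\bx)}{A_2(\bx)+c_2}\right).
\]

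Each summand $A\mapsto A/(A+c)$ is strictly increasing and strictly concave, so with $c_1=c_2=c$ the right-hand side is a strictly concave symmetric function of $(A_1,A_2)$; for any fixed total $A_1+A_2$ it is uniquely maximized at the balanced point $A_1=A_2$. I would then engineer the gadget so that this balancing condition encodes the partition: a location (or small sub-gadget) is attached to each number $a_j$, with memberships and weights chosen so that (i) the total opening mass $A_1+A_2$ equals the same constant $2B$ for every solution the optimizer would consider, and (ii) the attainable split $A_1$ ranges exactly over the subset sums $\{\sum_{j\in S}a_j\}$. Under (i)--(ii) the limiting objective is maximized precisely when $A_1=A_2=B$, which is attainable if and only if the instance is a \emph{yes}-instance; hence the limiting optimum equals the explicit threshold $\tau=qB/(B+c)$ in the \emph{yes}-case and is strictly below $\tau$ otherwise.

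Next I would discharge the limit. Since the feasible set $\{\bx:\sum_i x_i\le r,\ \bx\in\{0,1\}^m\}$ is finite, the convergence of $F(\bx;\sigma)$ to $\frac{q}{2}\sum_n A_n/(A_n+c_n)$ is uniform over feasible points, so the strict separation between \emph{yes}- and \emph{no}-instances persists for all sufficiently small $\sigma$. I would then fix a single rational $\sigma$ of polynomially bounded bit-length that preserves this gap, which turns the construction into a polynomial-time many-one reduction and establishes \texttt{NP-hardness} of \eqref{prob:CNL-MCP} already for $T=1$, $N=2$.

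The hard part will be the gadget in the second step, because capture is monotone: opening an extra facility can only increase $F$, so the optimizer is never penalized for ``stacking'' mass, and a naive two-copies-per-number design lets it double-cover the large numbers and drop the small ones, destroying the intended correspondence. The crux is therefore to pin the total opening mass $A_1+A_2$ across all candidate optima (so that only the \emph{balance} is at stake) and to render each number's nest assignment effectively mutually exclusive and exhaustive using only the cardinality constraint available in \eqref{prob:CNL-MCP}. Establishing that this pinning is achievable, that balance then coincides exactly with a valid partition, and that the resulting strict limiting gap survives at the chosen finite $\sigma$, is the main technical content I would need to supply.
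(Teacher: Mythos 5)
Your high-level skeleton matches the paper's: drive $\sigma \to 0^+$ so that the one-class, two-nest CNL objective collapses to a sum of two fractions, then reduce from \textsc{Set Partition}; and your plan for discharging the limit (uniform convergence over the finite feasible set, then fixing one rational $\sigma$ of polynomial bit-length) is in fact \emph{more} careful than the paper's informal ``select $\sigma_{1n}$ close to zero.'' The genuine gap is the one you name yourself: the gadget that pins the total mass and makes ``balance $\Leftrightarrow$ valid partition'' is never constructed, so what you have is a proof plan, not a proof. Worse, the literal version you sketch fails. If the total opening mass must equal the fixed constant $2B$ for every candidate solution while $A_1$ ranges over subset sums, then under a cardinality-$r$ constraint each location $j$ must carry nest masses $(a_j,\, 2B/r - a_j)$, which requires $\max_j a_j \le 2B/r$; but there are yes-instances (e.g.\ one large number $M$ together with $3M$ ones, so $2B = 4M$) in which \emph{every} partition subset has size exceeding $2B/\max_j a_j$, so for every admissible $r$ the construction reports ``no'' on a yes-instance. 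Separately, with the natural complementary gadget ($a_j$ versus $C - a_j$, $C$ constant) the pinned total is $rC$ and your symmetric balance point sits at $rC/2$, which is not the partition target $B$. Both defects are repairable — e.g.\ nest masses $C/2 + a_j - B/r$ and $C/2 - a_j + B/r$ with $C$ large, iterating $r = 1,\dots,m$, place the balance point exactly at subset-sum $B$ — but supplying and verifying such a correction is precisely the ``main technical content'' you deferred, and it is the entire substance of the theorem.

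For contrast, the paper fills this hole by a different decomposition: after the same $\sigma \to 0$ collapse it sets $v'_{ni} = \log \alpha^1_{in}$, recognizes the limiting objective as the MNL objective with \emph{two customer types}, and reuses the \textsc{Set Partition} reduction from Proposition \ref{prop:NPhard-MNL}: for each $r \in [m]$ take $V_{1j} = a_j$ and $V_{2j} = \max_j a_j + 1 - a_j$ (so $|S| = r$ pins the total, and the constant per-location sum makes these realizable as memberships $\alpha_{j1} + \alpha_{j2} = 1$ after rescaling), with \emph{asymmetric} competitor masses $U^c_1 = r(1 + \max_j a_j)$, $U^c_2 = 2K$ and weights $q_t = 1/U^c_t$, which make the first-order condition linear and place the unique optimizer exactly at subset-sum $K$. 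Note that even this reuse is not free for your setting: a single customer class forces \emph{equal} weights on the two nest fractions, whereas Proposition \ref{prop:NPhard-MNL} relies on $q_1 \neq q_2$, so the parameters must be re-balanced (the paper glosses over this; it is the same equal-weights obstruction your symmetric design runs into). In short, your outline points in the right direction, but the pinning-and-placement gadget is where the proof lives, and it is absent.
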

\begin{proof}
The NP-hardness can be verified by converting \eqref{prob:CNL-MCP} with two nests into an instance of the MNL-based problem with two customer types. We first write the objective function under the CNL choice model as
\[
F(\bx) = q_1 \frac{\sum_{n \in [N]} W_{1n}^{\sigma_{1n} - 1} \left( \sum_{i \in [m]} \alpha^1_{in} e^{v_{1i}/\sigma_{1n}} x_i \right)}{\sum_{n \in [N]} W_{1n}^{\sigma_{1n}}}
\]
We can choose \(\sigma_{1n} \approx 0\) for all \(n \in [2]\) and \(v_{1i} = 0\) for all \(i \in [m] \cup \cC\). Numerically, this can be done by selecting \(\sigma_{1n}\) close to zero and \(v_{1i}\) much smaller than \(\sigma_{1n}\). With this selection, we have \(e^{v_{1i}/\sigma_{1n}} = 1\) for all \(n \in [N]\) and \(i \in [m] \cup \cC\), and \(W_{1n}^{\sigma_{1n}} = 1\). We can then write the objective function as
\[
F(\bx) = q_1 \sum_{n \in [N]} \frac{\left( \sum_{i \in [m]} \alpha^1_{in} x_i \right)}{ U^c_n + \sum_{i \in [m]} \alpha^1_{in} x_i }
\]
where \(U^c_n = \sum_{i \in \cC} \alpha^1_{in}\). We now select \(v'_{ni} = \log \alpha^1_{in}\) for all \(n \in [N]\) and \(i \in [m] \cup \cC\). We continue to write the objective function as:
\[
F(\bx) = \sum_{n \in [N]} q_1 \frac{\left(\sum_{i \in [m]} e^{v'_{ni}} x_i \right)}{ U^c_n + \sum_{i \in [m]} e^{v'_{ni}} x_i }
\]
which is similar to the objective function under MNL with two customer types, where each nest now corresponds to a customer type. From the proof of Proposition \ref{prop:NPhard-MNL}, we know that this problem is NP-hard by connecting it with the Set Partition problem. This confirms the NP-hardness of \eqref{prob:CNL-MCP} with \(T=1\) and \(N=2\).
\end{proof}

The NP-hardness stated in Theorem \ref{th:NP-hard} is tight, in the sense that relaxing any of its conditions generally leads to a polynomial-time solvable problem. Specifically, if the choice model is simpler than the CNL (such as MNL or NL), then from Propositions \ref{prop:NPhard-MNL} and \ref{prop:NL-poly}, we know that the problem is polynomial-time solvable. Additionally, if there are fewer than $2$ nests, i.e., \(N=1\), then it can be seen that the objective function becomes:
\[
F(\mathbf{x}) = \sum_{t \in [T]} \frac{\sum_{n \in [N]} W_{tn}^{\sigma_{tn} - 1} \left( \sum_{i \in [m]} \alpha^t_{in} V_{tin} x_i \right)}{\sum_{n \in [N]} W_{tn}^{\sigma_{tn}}} = \sum_{t \in [T]} q_t \frac{\sum_{i \in [m]} V_{ti1} x_i}{\sum_{j \in [m] \cup \mathcal{C}} V_{tj1} x_j}
\]
which is the objective function of the MCP under the MNL model, and we know that when \(T=1\), the problem can be solved in \(\mathcal{O}(m \log m)\) (Proposition \ref{prop:NPhard-MNL}).

\section{Solution Methods}\label{sec:methods}
In this section, we discuss an exact solution method to solve the challenging problem in \eqref{prob:CNL-MCP}. Previous works have primarily focused on the \CSE  setting (where the competitor's facilities are separated in the cross-nested structure). Here, we will examine both \CSE  and the general \CSH  settings. Our main findings below show that while the objective function under \CSE  is concave in $\bx$, the objective function under \CSH is not. Furthermore, we show that, through some variable changes, the MCP under {\CSH} can be converted into an equivalent \textit{mixed-integer exponential cone convex program}, allowing it to be solved exactly via Cutting Plane or B\&C methods.

\subsection{Concavity}
We first examine the concavity of the objective function $F(\bx)$ under both  \CSE     and \CSH configurations.
In Proposition \ref{pro:incr-concv} below, we state that the objective function of \eqref{prob:CNL-MCP} is generally concave under \CSE:
\begin{proposition}\label{pro:incr-concv}
Under \CSE (i.e., the competitor's facilities are not sharing the cross-nested structure with the opening facilities), $F(\bx)$ in \eqref{prob:CNL-MCP} is monotonically increasing and concave in $\bx$.     
\end{proposition}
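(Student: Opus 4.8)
The plan is to prove both properties from the reduced \CSE form of the objective in \eqref{eq:obj-CSE}, which I would first rewrite as $F(\bx) = \sum_{t\in[T]} q_t - \sum_{t\in[T]} q_t\, g_t(\bx)$ with $g_t(\bx) = U^c_t/(U^c_t + \sum_{n\in[N]} W_{tn}^{\sigma_{tn}})$. The essential structural feature of \CSE that I would invoke is that, once competitors are separated into their own nests, every surviving $W_{tn} = \sum_{i\in[m]}\alpha^t_{in}V_{tin}x_i$ depends only on the decision variables and is a nonnegative linear function of $\bx$, while the competitors' contribution collapses into the constant $U^c_t$. Since the leading sum $\sum_{t\in[T]} q_t$ is constant and each $q_t>0$, both claims reduce to showing that every $g_t$ is nonincreasing and convex on the continuous relaxation $\bx\in[0,1]^m$ (monotonicity and concavity of the $0/1$ objective being understood through this continuous extension).

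First I would dispatch monotonicity, which also sets up the concavity argument. Because $\alpha^t_{in},V_{tin}\geq 0$, each $W_{tn}$ is nondecreasing in every coordinate $x_i$; as $\sigma_{tn}>0$, the map $W\mapsto W^{\sigma_{tn}}$ is increasing on $[0,\infty)$, so $\sum_{n\in[N]} W_{tn}^{\sigma_{tn}}$ is nondecreasing and hence $g_t$ is nonincreasing. Consequently $-q_t g_t$ is nondecreasing, and summing over $t$ shows that $F$ is monotonically increasing.

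For concavity I would chain two composition facts. Writing $h_t(\bx)=\sum_{n\in[N]} W_{tn}^{\sigma_{tn}}$, each outer map $W\mapsto W^{\sigma_{tn}}$ is concave and increasing on $[0,\infty)$ because $\sigma_{tn}\in(0,1]$; composing it with the affine map $\bx\mapsto W_{tn}(\bx)$ preserves concavity, and a nonnegative sum of concave functions is concave, so $h_t$ is concave. Next, the scalar function $\phi(s)=U^c_t/(U^c_t+s)$ satisfies $\phi''(s)=2U^c_t/(U^c_t+s)^3>0$ and $\phi'(s)<0$ for $s\geq 0$, hence $\phi$ is convex and nonincreasing. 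Applying the rule that a convex nonincreasing outer function composed with a concave inner function is convex gives that $g_t=\phi\circ h_t$ is convex; therefore $-q_t g_t$ is concave and $F(\bx)=\sum_{t\in[T]} q_t - \sum_{t\in[T]} q_t g_t(\bx)$ is concave.

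The main obstacle is not the convexity bookkeeping, which is routine, but making the \CSE reduction airtight: I must verify precisely that the separation of competitors' facilities is what permits the denominator to split as $U^c_t + \sum_{n\in[N]} W_{tn}^{\sigma_{tn}}$ with $U^c_t$ constant and every remaining $W_{tn}$ a function of $\bx$ alone, so that the numerator collapses to $\sum_{n\in[N]} W_{tn}^{\sigma_{tn}}$ and the clean form of \eqref{eq:obj-CSE} is valid. Once that structural identity is established, the only point that needs a little care is that the composition rule requires the monotonicity of the outer functions, which is exactly why I would establish monotonicity before invoking it.
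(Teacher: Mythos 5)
Your proposal is correct and follows essentially the same route as the paper's proof: both reduce to the \CSE form \eqref{eq:obj-CSE} with constant numerators $U^c_t$, note that each $W_{tn}$ is affine and increasing in $\bx$ so that $W_{tn}^{\sigma_{tn}}$ is concave and increasing (since $\sigma_{tn}\leq 1$), and then compose the concave increasing denominator with a convex decreasing map (your $\phi(s)=U^c_t/(U^c_t+s)$ is just the paper's $f(z)=1/z$ shifted and scaled) to conclude each fraction is convex and decreasing, hence $F$ is concave and increasing. The extra care you take in justifying the \CSE splitting and the continuous relaxation is sound but not a departure from the paper's argument.
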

\begin{proof}
To see why $F(\bx)$ is monotonic and concave in $\bx$, we consider the formulation in \eqref{eq:obj-CSE}:
\begin{equation}
F(\mathbf{x})= \sum_{t \in [T]} q_t - \sum_{t \in [T]} \frac{U^c_t}{U^c_t + \sum_{n \in [N]} W_{tn}^{\sigma_{tn}}}
\end{equation}
and observe that each $W_{tn} = \sum_{i\in[m]}\alpha^t_{in}V_{tin}x_i + \sum_{i\in[m]}\alpha^t_{in}V_{tin}$ is linear in $\bx$ and monotonically increasing in each $x_i$, $i\in [m]$. Therefore, $W_{tn}^{\sigma_{tn}}$ is concave and monotonically increasing in $\bx$ (since $\sigma_{tn}\leq 1$). We then have that $U^c_t + \sum_{n \in [N]} W_{tn}^{\sigma_{tn}}$ is concave and monotonically increasing in $\bx$. We now see that the function $f(z) = 1/z$ is convex and decreasing in $z\in\bbR_+$, thus each fraction 
\[
\frac{U^c_t}{U^c_t + \sum_{n \in [N]} W_{tn}^{\sigma_{tn}}}
\]
is convex and decreasing in $\bx$, which implies that $F(\bx)$ is concave and increasing in $\bx$. 
\end{proof}

\citep{mendez2023follower} prove that the objective function under the NL model and \CSE setting is concave. The result stated in Proposition \ref{pro:incr-concv} generalizes this result by claiming that the objective function is also concave under the CNL model. In general, concavity is possible because the objective function can be written as a sum of fractions with constant numerators. If this is not the case, as under the \CSH setting, the objective function will no longer be convex. We state this result in Proposition \ref{prop:non-concave} below:
\begin{proposition}\label{prop:non-concave}
    Under \CSH (i.e., the competitor's facilities are sharing the cross-nested structure with the opening facilities), $F(\bx)$ in \eqref{prob:CNL-MCP} is monotonically increasing, but \textbf{not concave} in $\bx$.
\end{proposition}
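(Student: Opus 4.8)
The plan is to base both assertions on one decomposition. Writing $A_{tn}=\sum_{i\in[m]}\alpha^t_{in}V_{tin}x_i$ and $B_{tn}=\sum_{i\in\cC}\alpha^t_{in}V_{tin}$, so that $W_{tn}=A_{tn}+B_{tn}$, the numerator of each type-$t$ term telescopes: $\sum_{n}W_{tn}^{\sigma_{tn}-1}A_{tn}=\sum_n W_{tn}^{\sigma_{tn}}-\sum_n B_{tn}W_{tn}^{\sigma_{tn}-1}$. Hence $F(\bx)=\sum_{t\in[T]}q_t\,(1-G_t)$, where $G_t=\big(\sum_n B_{tn}W_{tn}^{\sigma_{tn}-1}\big)/\big(\sum_n W_{tn}^{\sigma_{tn}}\big)$ is precisely the probability that a type-$t$ customer picks a competitor facility. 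Both parts of the statement then reduce to properties of $G_t$.

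For monotonicity I would note that each $W_{tn}$ is affine and nondecreasing in $\bx$ (all coefficients $\alpha^t_{in}V_{tin}\ge0$), so it suffices to show $G_t$ is nonincreasing in each $W_{tk}$. Differentiating the ratio, the sign of $\partial G_t/\partial W_{tk}$ equals the sign of $B_{tk}(\sigma_{tk}-1)W_{tk}^{\sigma_{tk}-2}\big(\sum_n W_{tn}^{\sigma_{tn}}\big)-\sigma_{tk}W_{tk}^{\sigma_{tk}-1}\big(\sum_n B_{tn}W_{tn}^{\sigma_{tn}-1}\big)$. Since $\sigma_{tk}\le1$, the first product is $\le0$ and the second is manifestly $\le0$; therefore $\partial G_t/\partial W_{tk}\le0$, so $G_t$ decreases and $F$ increases in $\bx$. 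This is just the regularity of the \textsc{GEV} family: enlarging the offered set cannot raise the choice probability of a retained competitor.

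For non-concavity the natural plan is to produce a counterexample, and the cleanest setting is $T=1$, $N=2$, where $F$ is concave if and only if $G_1$ is convex in $(W_{11},W_{12})$ (the map $\bx\mapsto(W_{11},W_{12})$ being affine). I would restrict $G_1$ to a line $\bW\mapsto\bW+\tau\,\bh$ and study its second derivative, which splits into four contributions built from $p=\sum_n B_{1n}W_{1n}^{\sigma_{1n}-1}$ (convex), $d=\sum_n W_{1n}^{\sigma_{1n}}$ (concave), and their ratio, namely $p''-\tfrac{2p'd'}{d}-\tfrac{pd''}{d}+\tfrac{2p(d')^{2}}{d^{2}}$. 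A valid counterexample amounts to finding a feasible point and a direction making this quantity negative.

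The step I expect to be the main obstacle is exactly this search, because the curvature is stubbornly favorable. Three of the four terms are sign-definite for \emph{every} direction: $p''\ge0$, $-pd''/d\ge0$ (as $d''\le0$ since $\sigma_{1n}\le1$), and $2p(d')^2/d^2\ge0$; only the cross term $-2p'd'/d$ can be negative, and only along a \emph{non-monotone} perturbation (some $x_i$ increased while others decreased, i.e.\ comparing two non-nested facility sets). So any counterexample must use such a mixed direction. Worse, each summand $W_{1n}^{\sigma_{1n}-1}$ has exponent in $(-1,0)$ and hence obeys $2\phi\phi''\ge(\phi')^2$, which (after an AM--GM estimate of the cross terms) gives $2p\,p''\ge(p')^2$ and then $p''+2p(d')^2/d^2\ge 2|p'd'|/d$; this tends to neutralize the lone cross term on the feasible region $W_{1n}\ge B_{1n}$. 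I would therefore hunt in small instances with strongly dissimilar nest scales $\sigma_{11}\ne\sigma_{12}$ and asymmetric competitor masses $B_{11}\ne B_{12}$, looking for a point and a mixed direction where the cross term wins; certifying such a feasible configuration (and the violated concavity inequality) is the crux, and is where I anticipate the real difficulty to lie.
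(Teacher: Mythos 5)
Your monotonicity argument is correct and complete: the decomposition $F(\bx)=\sum_t q_t(1-G_t)$ with $G_t$ the competitor-capture probability matches the paper's own rewriting of the objective, and your sign analysis of $\partial G_t/\partial W_{tk}$ (both terms of the numerator nonpositive because $\sigma_{tk}\le 1$ and all data are nonnegative) is a clean, explicit proof of the increasing part — indeed more explicit than anything the paper writes down for that half of the statement.

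The non-concavity half, however, has a genuine gap: you never produce the counterexample. Your proposal ends with a description of where to hunt for one (mixed directions, dissimilar $\sigma_{1n}$, asymmetric competitor masses $B_{1n}$) and an honest admission that certifying a feasible configuration violating concavity "is the crux" — but a proof of non-concavity requires an actual witness, i.e., concrete numbers $(\sigma_{1n}, \alpha_{in}, V_{in}, B_{1n})$ together with either a midpoint violation $F\bigl(\tfrac{1}{2}(\bx^1+\bx^2)\bigr)<\tfrac{1}{2}F(\bx^1)+\tfrac{1}{2}F(\bx^2)$ or a violated gradient inequality $F(\bx^*)>F(\bar\bx)+\nabla F(\bar\bx)^\transpose(\bx^*-\bar\bx)$. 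Your structural observations are actually consistent with the paper's remark that $F$ is concave in each individual coordinate $x_i$ (coordinate directions are monotone, so only mixed directions can exhibit negative curvature), but consistency with the difficulty is not a resolution of it. It is worth noting that the paper itself does not close this gap analytically either: it concedes that a small explicit counterexample is hard to construct and instead certifies non-concavity computationally — on benchmark instances (e.g., \texttt{cap103}), the direct outer-approximation method \textbf{DOA}, whose cuts are valid whenever $F$ is concave, terminates at provably suboptimal solutions, which exhibits a violated gradient inequality and hence implies non-concavity. To complete your proposal you would need to do one of these two things: either carry your line-restriction analysis through to an explicit numerical instance where the cross term $-2p'd'/d$ dominates (and report the violating points), or adopt the paper's computational certification by exhibiting an instance on which a tangent cut of $F$ cuts off the true optimum.
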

While it is challenging to provide a counterexample with one or two variables to demonstrate that the objective function \( F(\bx) \) is not concave in \( \bx \) (although it can be shown that \( F(\bx) \) is concave in each individual element \( x_i \), for any \( i \in [m] \)), our experiments reveal instances where an outer-approximation method with cuts generated directly based on \( F(\bx) \) fails to return optimal solutions. This directly implies the non-concavity of \( F(\bx) \).

\subsection{Convexification}

The non-concavity of \( F(\bx) \) implies that \eqref{prob:CNL-MCP} cannot be solved directly to optimality using outer-approximation cuts. Fortunately, we can demonstrate below that, with some changes of variables, we can transform \eqref{prob:CNL-MCP} into a concave program. To begin, let us rewrite the objective function of \eqref{prob:CNL-MCP} as
\begin{align}
    F(\bx)  &= \sum_{t\in [T]}q_t\frac{\sum_{n\in [N]}{W_{tn}}^{\sigma_{tn}-1} (\sum_{i\in [m]}\alpha^t_{in}V_{tin} x_i)}{ \sum_{n\in [N]} W_{tn}^{ \sigma_{tn}}}\nonumber\\
    &=\sum_{t\in [T]}q_t\frac{\sum_{n\in [N]}{W_{tn}}^{\sigma_{tn}-1} (W_{tn}  - U^c_{tn})}{ \sum_{n\in [N]} W_{tn}^{ \sigma_{tn}}}\nonumber \\
    &= \sum_{t\in [T]}q_t  - \sum_{t\in [T]}q_t \frac{\sum_{n\in [N]}{W_{tn}}^{\sigma_{tn}-1}  U^c_{tn}}{ \sum_{n\in [N]} W_{tn}^{ \sigma_{tn}}},\nonumber 
\end{align}
where $U^c_{tn} = \sum_{i\in \cC \cap \cN_n^t} \alpha^t_{in}V_{tin}$. We then define some new variables as follows:
\allowdisplaybreaks
\begin{align}
   y_{tn} &= \log(W_{tn}^{\sigma_{tn}-1} U^c_{tn}),\forall t\in [T], n\in [N]\nonumber\\
   z_{t} &= \log\left(\sum_{n\in [N]} W_{tn}^{\sigma_{tn}}\right)
 \end{align}
 We then write \eqref{prob:CNL-MCP} as  the following equivalent nonlinear program:
 \allowdisplaybreaks
\begin{align}
    \max_{\textbf{\bx,\by,\bz}}\quad\quad& 
    \sum_{t\in [T]}q_t  - \sum_{t\in [T]}\sum_{n\in [N]}q_t \exp\left( y_{tn}-z_{t}\right)\label{prob:CNL-non-convex}\\
    \text{s.t.}\quad 
    &  y_{tn} = \log(W_{tn}^{\sigma_{tn}-1} U^c_{tn}),\forall t\in [T], n\in [N]\label{eq:ctr-n1}\\
  & z_{t} =   \log\left(\sum_{n\in [N]} W_{tn}^{\sigma_{tn}}\right),~ \forall t\in [T]\label{eq:ctr-n2}\\
  & W_{tn} = \sum_{i\in [m]} \alpha^t_{in} V_{tin} x_i + U^c_{tn},~\forall t\in [T], n\in [N] \nonumber\\
  & \sum_{i\in [m]}x_i = r\nonumber\\
  & \bx \in \{0,1\}^m, \by \in \bbR^{T\times N},~\bz \in \bbR^{T}\nonumber.
\end{align}
It can be further shown that the above nonlinear non-convex program can formulated as the following optimization problem with linear objective and convex constraints.

\begin{theorem}\label{thrm:CNL-convex}
    The MCP under the CNL model in \eqref{prob:CNL-MCP} is equivalent to the following mixed-integer nonlinear convex program:
    \allowdisplaybreaks
    \begin{align}
    \max_{{\bx,\by,\bz,\btheta,\bW}}\quad\quad& 
    \sum_{t\in [T]}q_t  - \sum_{t\in [T]}\sum_{n\in [N]}q_t \theta_{tn} \label{prob:CNL-Convex-refor}\tag{\sf MCP-CNL-2}\\
    \text{s.t.} \quad\quad
    &   \theta_{tn} \geq \exp\left( y_{tn}-z_{t}\right),~\forall t\in [T], n\in[N]\label{eq:ctr-1} \\
    & y_{tn} \geq (\sigma_{tn}-1)\log(W_{tn})  + \log(U^c_{tn}),\forall t\in [T], n\in [N]\label{eq:ctr-2}\\
  & z_{t} \leq \log\left(\sum_{n\in [N]} W_{tn}^{\sigma_{tn}}\right),~ \forall t\in [T]\label{eq:ctr-3}\\
  & W_{tn} = \sum_{i\in [m]} \alpha^t_{in} V_{tin} x_i + U^c_{tn},~\forall t\in [T], n\in [N] \nonumber\\
  & \sum_{i\in [m]}x_i = r\nonumber\\
  & \bx \in \{0,1\}^m, \by,\bW,\btheta \in \bbR^{T\times N},~\bz \in \bbR^{T}\nonumber.
\end{align}
    
\end{theorem}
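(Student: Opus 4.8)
The plan is to obtain \eqref{prob:CNL-Convex-refor} from the already-equivalent non-convex program \eqref{prob:CNL-non-convex} in two moves: first relax the defining equalities \eqref{eq:ctr-n1}--\eqref{eq:ctr-n2} for $y_{tn}$ and $z_t$ into the inequalities \eqref{eq:ctr-2}--\eqref{eq:ctr-3}, and introduce an epigraph variable $\theta_{tn}$ for the exponential term through \eqref{eq:ctr-1}; then argue that (a) each relaxed constraint is tight at every optimum, so the attainable objective values are unchanged, and (b) the resulting feasible region is convex in the continuous variables. Since \eqref{prob:CNL-non-convex} is equivalent to \eqref{prob:CNL-MCP}, establishing (a) and (b) proves the theorem.

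For the tightness argument (a), I would use that the objective $\sum_{t\in[T]}q_t - \sum_{t\in[T]}\sum_{n\in[N]}q_t\theta_{tn}$ is, since $q_t>0$, strictly decreasing in each $\theta_{tn}$, so at any optimum \eqref{eq:ctr-1} holds with equality, i.e. $\theta_{tn}=\exp(y_{tn}-z_t)$. In turn $\exp(y_{tn}-z_t)$ is increasing in $y_{tn}$ and decreasing in $z_t$, so to minimise $\sum_{t,n}q_t\theta_{tn}$ one drives every $y_{tn}$ down to its lower bound in \eqref{eq:ctr-2} and every $z_t$ up to its upper bound in \eqref{eq:ctr-3}. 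Crucially these moves never conflict: $y_{tn}$ affects only $\theta_{tn}$, while raising $z_t$ lowers $\theta_{tn}$ for all $n$ at once. Hence at the optimum all of \eqref{eq:ctr-1}--\eqref{eq:ctr-3} are tight, recovering exactly the equalities of \eqref{prob:CNL-non-convex} and the same objective value for the same $\bx$.

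For the convexity argument (b), I would treat $\bx$ as relaxed to $[0,1]^m$, so that each $W_{tn}=\sum_{i\in[m]}\alpha^t_{in}V_{tin}x_i + U^c_{tn}$ is affine in $\bx$, and check the three constraint families separately. Constraint \eqref{eq:ctr-1} is the exponential-cone condition $\theta_{tn}\ge\exp(y_{tn}-z_t)$, whose right-hand side is convex in $(y_{tn},z_t)$, hence defines a convex set. For \eqref{eq:ctr-2}, because $\sigma_{tn}\le 1$ the coefficient $\sigma_{tn}-1$ is nonpositive and $\log(W_{tn})$ is concave in $\bx$ (a concave map of an affine one), so $(\sigma_{tn}-1)\log(W_{tn})$ is convex and $y_{tn}\ge(\text{convex})$ is convex. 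For \eqref{eq:ctr-3}, each $W_{tn}^{\sigma_{tn}}$ with $\sigma_{tn}\in(0,1]$ is concave in $\bx$, their sum is concave, and composing with the increasing concave map $\log$ preserves concavity, so $z_t\le(\text{concave})$ is convex. The objective is linear and the cardinality constraint $\sum_{i\in[m]}x_i=r$ is linear, so the continuous relaxation is convex and the full model is a mixed-integer convex program.

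The main obstacle I anticipate is aligning the direction of each relaxation with the direction in which the objective pushes the variables: the equalities must be relaxed so that $y_{tn}$ is bounded \emph{below} and $z_t$ is bounded \emph{above}, which is precisely the orientation that (i) makes \eqref{eq:ctr-2}--\eqref{eq:ctr-3} convex given $\sigma_{tn}\le 1$ and (ii) is rendered tight by the minimisation of $\sum_{t,n}q_t\exp(y_{tn}-z_t)$. Verifying that these two requirements coincide rather than conflict is the crux of the argument; once the concavity of $W_{tn}^{\sigma_{tn}}$ and of $\log(W_{tn})$ in $\bx$ is in hand — both immediate consequences of $\sigma_{tn}\le 1$ and the affineness of $W_{tn}$ — the remaining convexity checks are routine.
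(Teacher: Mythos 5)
Your proposal is correct and follows essentially the same route as the paper's proof: relax the defining equalities into inequalities oriented by the objective's monotonicity (so that they are tight at any optimum), then verify convexity of each constraint family using $\sigma_{tn}\le 1$ and the affineness of $W_{tn}$. The only cosmetic difference is that the paper proves the concavity of $\log\bigl(\sum_n W_{tn}^{\sigma_{tn}}\bigr)$ from first principles via a convex-combination inequality, whereas you invoke the standard concave-increasing composition rule — the same fact — and your tightness argument is, if anything, spelled out slightly more carefully than the paper's.
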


\begin{proof}
From the non-convex problem in \eqref{prob:CNL-non-convex}, we can let \(\theta_{tn} = \exp(y_{tn} - z_{t})\). Since we are solving a maximization problem, each \(\theta_{tn}\) will need to be minimized. Thus, the equalities \(\theta_{tn} = \exp(y_{tn} - z_{t})\) can be safely converted to the convex constraints \(\theta_{tn} \geq \exp(y_{tn} - z_{t})\). Similarly, since we want to minimize each component \(\exp(y_{tn} - z_t)\), for any \(t \in [T]\), \(n \in [N]\), each additional variable \(y_{tn}\) needs to be minimized, and each \(z_t\) needs to be maximized as much as possible. As a result, the constraints \eqref{eq:ctr-n1} and \eqref{eq:ctr-n2} can be safely converted to inequality constraints \eqref{eq:ctr-2} and \eqref{eq:ctr-3}, respectively.

To see that all the constraints in \eqref{eq:ctr-1} are convex, we note that the functions \(\exp(y_{tn} - z_t)\) are convex in \(\by, \bz\), thus Constraints \eqref{eq:ctr-1} are convex. Moreover, the function \(\log(W_{tn})\) is concave in \(W_{tn}\) and \(\sigma_{tn} \leq 1\), implying that Constraints \eqref{eq:ctr-2} are concave.

For Constraints \eqref{eq:ctr-3}, we observe that each function \(W_{tn}^{\sigma_{tn}}\) is concave in \(W_{tn}\) (because \(\sigma_{tn} \leq 1\)). Thus, \(\sum_{n \in [N]} W_{tn}^{\sigma_{tn}}\) is concave in \(\bW\) (where \(\bW\) is a vector containing all \(W_{tn}\) for any \(t \in [T]\) and all \(n \in [N]\)). For ease of notation, let \(\psi_t(\bW) = \sum_{n \in [N]} W_{tn}^{\sigma_{tn}}\). We now prove that the function \(\cT(\bW) = \log(\psi_t(\bW))\) is concave in \(\bW\). This can be verified by noting that the function \(h(t) = \log(t)\) is concave in \(t\). Thus, for any \(\alpha \in [0, 1]\) and two vectors \(\bW^1, \bW^2\), this convexity implies:
\begin{align}
    \alpha \log(\psi_t(\bW^1)) + (1-\alpha) \log(\psi_t(\bW^2)) &\leq \log\left(\alpha \psi_t(\bW^1) + (1-\alpha) \psi_t(\bW^2)\right)\nonumber\\
    &\stackrel{(a)}{\leq} \log\left(\psi_t(\alpha \bW^1 + (1-\alpha) \bW^2)\right)\label{eq:1234}
\end{align}
where \((a)\) holds because \(\alpha \psi_t(\bW^1) + (1-\alpha) \psi_t(\bW^2) \leq \psi_t(\alpha \bW^1 + (1-\alpha) \bW^2)\) (\(\psi_t(\bW)\) is concave in \(\bW\)) and the function \(\log(t)\) is increasing in \(t\). The inequality in \eqref{eq:1234} confirms the concavity of \(\log(\psi_t(\bW))\). Thus, Constraints \eqref{eq:ctr-3} are convex. This completes the proof.
\end{proof}

Proposition \ref{prop:non-concave} indicates that the objective function under \CSH is generally non-concave, rendering a direct outer-approximation approach \citep{Ljubic2018outer, mai2020multicut} no longer an exact method. However, Theorem \ref{thrm:CNL-convex} demonstrates that \eqref{prob:CNL-MCP} can be converted into a mixed-integer convex program, where an outer-approximation method can be used to solve it to optimality. In the subsequent section, we discuss how to solve \eqref{prob:CNL-Convex-refor} using CP or B\&C procedures, incorporating outer-approximation and submodular cuts.

\subsection{Cutting Plane and B\&C Approaches}\label{sec:CP-BC}

The outer-approximation approach \citep{duran1986outer,OA_Bonami2008algorithmic} provides a powerful and widely used technique for solving mixed-integer nonlinear problems, particularly those with convex objectives and constraints. The general idea is to define a master problem, typically in the form of a MILP, where the nonlinear constraints and objective function are replaced by a sequence of outer-approximation cuts. By iteratively adding cuts and solving the master problem, the method can guarantee convergence to an optimal solution after a finite number of iterations. These valid cuts and the master problem can be incorporated into a CP or B\&C procedure.

In the context of the MCP under the CNL model, Theorem \ref{thrm:CNL-convex} demonstrates that the original MINLP can be transformed into a mixed-integer convex program, as given in \eqref{prob:CNL-Convex-refor}, enabling the application of an outer-approximation algorithm. To describe the method, and for ease of notation, let us define the nonlinear functions in \eqref{prob:CNL-Convex-refor} as follows:
\allowdisplaybreaks
\begin{align}
   \Psi^{\theta}_{tn}(\by,\bz)& =  \exp\left( y_{tn}-z_{t}\right),~\forall t\in [T], n\in[N] \nonumber\\
   \Psi^y_{tn}(\bW) &=  (\sigma_{tn}-1)\log(W_{tn})  + \log(U^c_{tn}),\forall t\in [T], n\in [N]\nonumber\\
   \Psi^z_{tn}(\bW) &= \log\left(\sum_{n\in [N]} W_{tn}^{\sigma_{tn}}\right),~ \forall t\in [T].\nonumber
\end{align}
We know that $\Psi^\theta_{tn}(\by,\bz)$ and $\Psi^y_{tn}(\bW)$ are convex in their inputs, while $\Psi^z_{tn}(\bW)$ is concave in $\bW$, thus the following outer-approximation cuts are valid for \eqref{prob:CNL-Convex-refor}:
\allowdisplaybreaks
\begin{align}
   y_{tn}&\geq \nabla_{\bW}\Psi^{y}_{tn}(\Bar{\bW})^\transpose(\bW-\Bar{\bW}) + \Psi^y_{tn}(\Bar{\bW}),~\forall t\in [T], n\in [N] \label{oa-1}\tag{\sf OA1}\\
   z_{t}&\leq \nabla_{\bx}\Psi^z_{t}(\overline{\bW})^\transpose(\bW-\overline{\bW}) + \Psi^z_{t}(\overline{\bW}),~\forall t\in [T] \label{oa-2}\tag{\sf OA2}\\
   \theta_{tn} &\leq \left(\nabla_{\by}\Psi^\theta_{tn}(\Bar{\by},\Bar{\bz})\right)^\transpose(\by-\Bar{\by}) + \left(\nabla_{\bz}\Psi^\theta_{tn}(\Bar{\by},\Bar{\bz})\right)^\transpose(\bz-\Bar{\bz})\nonumber\\
   &\qquad\qquad + \Psi^\theta_{tn}(\oby,\obz),~\forall t\in [T], n\in [N] \label{oa-3}.\tag{\sf OA3}
\end{align}
Furthermore, prior studies demonstrate that the objective function \( F(\bx) \), if defined as a set function, is submodular. Specifically, if we define
\begin{align}
    \Phi_t(S) &= q_t\left(1 - \frac{\sum_{n\in [N]} W_{tn}(S)^{\sigma_{tn}-1}  U^c_{tn}}{\sum_{n\in [N]} W_{tn}(S)^{\sigma_{tn}}}\right), \forall t \in [T]\nonumber\\
    \Phi_t(\bx) &= q_t\left(1 - \frac{\sum_{n\in [N]} W_{tn}(\bx)^{\sigma_{tn}-1}  U^c_{tn}}{\sum_{n\in [N]} W_{tn}(\bx)^{\sigma_{tn}}}\right), \forall t \in [T]\nonumber
\end{align}
where \( W_{tn}(\bx) =  \sum_{i \in S} \alpha^t_{in} V_{tin} x_i + U^c_{tn} \) and \( W_{tn}(S) =  \sum_{i \in [m]} \alpha^t_{in} V_{tin} + U^c_{tn} \). It can be shown that \( \Phi_t(S) \) is monotonically increasing and submodular in \( S \)  \citep{dam2022submodularity,berbeglia2020assortment}. This suggests that submodular cuts can be incorporated to enhance the CP and B\&C procedures. To this end, if we denote \( \phi_t = q_t - \sum_{n \in [N]} q_t \theta_{tn} \) in \eqref{prob:CNL-Convex-refor}, then the  inequalities \( \phi_t \geq \Phi_t(\bx) \) are valid for \eqref{prob:CNL-Convex-refor}. 

To introduce submodular cuts for \eqref{prob:CNL-Convex-refor}, for each \( t \in [T] \) and \( i \in [m] \), we define \(\rho_{ti}(\bx) = \Phi_t(\bx + \textbf{e}_i) - \Phi_t(\bx)\), where \(\textbf{e}_i\) is a binary vector of size \( m \) with zero elements except for the \(i\)-th element, and \(\bx + \textbf{e}_i\) denotes the binary vector such that each element is the maximum of the corresponding elements in \(\bx\) and \(\textbf{e}_i\). The functions \(\rho_{ti}(S)\) are often referred to as marginal gains, representing the gains from adding an item \( k \) to the set \( S \). According to the submodularity of \( \Phi_t(S) \) for all \( t \in [T] \), the following submodular cuts are also valid for \eqref{prob:CNL-Convex-refor}:
\allowdisplaybreaks
\begin{align}
    \phi_{t} &\leq \sum_{i\in [m]}\rho_{ti}(\obx)(1-\obx_i)x_i - \sum_{i\in [m]}\rho_{ti}(\textbf{e}-\textbf{e}_i)\obx_i(1-x_i) + \Phi_t(\obx), \forall t\in [T]\label{sc-1}\tag{\sf SC1}\\
    \phi_{t} &\leq \sum_{i\in [m]}\rho_{ti}(\textbf{0}) - \sum_{i\in [m]}\rho_{ti}(\obx-\textbf{e}_i)\obx_i(1-x_i) + \Phi_t(\obx), \forall t\in [T]\label{sc-2}\tag{\sf SC2}
\end{align}
In summary, we employ the following master problem: 
\allowdisplaybreaks
  \begin{align}
    \max_{{\bx,\by,\bz,\bW,\btheta,\pmb{\phi}}}\quad\quad& 
    \sum_{t\in [T]}\phi_t\label{prob:CNL-master}\tag{\sf Master}\\
    \text{s.t.} \quad\quad
    & \phi_t \geq  q_t - \sum_{n \in [N]} q_t \theta_{tn}  \nonumber \\
  & W_{tn} = \sum_{i\in [m]} \alpha^t_{in} V_{tin} x_i + U^c_{tn},~\forall t\in [T], n\in [N] \nonumber\\
  & \sum_{i\in [m]}x_i = r\nonumber\\
  & \texttt{[Outer-approximation cuts \eqref{oa-1}, \eqref{oa-2},\eqref{oa-3}]}\nonumber \\
  & \texttt{[Submodular cuts \eqref{sc-1}, \eqref{sc-2}]} \nonumber\\
  & \bx \in \{0,1\}^m, \by,\bW ,\btheta \in \bbR^{T\times N},~\bz,\pmb{\phi} \in \bbR^{T}\nonumber.
\end{align}
A CP method works as follows. Starting from the master problem \eqref{prob:CNL-master} without any outer-approximation or submodular cuts, at each iteration, we solve the master problem to obtain a solution candidate and add outer-approximation cuts and submodular cuts to the master problem. This process terminates when we find a solution candidate \((\bx^*,\by^*,\bz^*,\bW^*,\pmb{\phi}^*)\) such that \(\sum_{t} \phi^*_t = F(\bx^*)\). Since the master problem also yields objective values that upper bound the optimal value of the original problem \eqref{prob:CNL-Convex-refor}, the stopping condition guarantees that the CP, when terminated, will always return an optimal solution to \eqref{prob:CNL-Convex-refor}. Moreover, it is known that the above CP procedure will never produce the same solution candidate throughout its process, so it will always terminate after a finite number of iterations \citep{duran1986outer}.

Algorithm \ref{algo:CP} describes the main steps of our CP algorithm. It maintains an upper bound (UB) and a lower bound (LB) of the optimal value of the MCP. After each iteration, the master problem \eqref{prob:CNL-master} is solved to obtain a new solution candidate. We then add outer-approximation cuts or submodular cuts for any violated constraint and update the LB as the objective value of the best location solution found, and the UB as the objective value given by the master problem. Since the master problem is a relaxation of \eqref{prob:CNL-Convex-refor}, it is guaranteed that the UB is always greater than \(F(\bx)\) for any feasible solution \(\bx\). Thus, when the stopping condition is met, it is guaranteed that the solution candidate obtained in the last step is optimal for the MCP. It is also guaranteed that this process always terminates after a finite number of iterations \citep{duran1986outer,bui2022cuttingplanealgorithmsnonlinear,OA_Bonami2008algorithmic}. 
\begin{algorithm}[!ht]            
        \caption{Cutting Plane Algorithm}
         \label{algo:CP} 
        \DontPrintSemicolon
        Choose a small $\epsilon \geq 0$ as the optimal gap.\;
         Initialize the master problem \eqref{prob:CNL-master} \\
         \KwIn{A candidate solution $({\overline{\bx},\overline{\by},\overline{\bz},\overline{\bW},\overline{\btheta},\overline{\pmb{\phi}}})$ of the master problem.}
        $\text{UB} \gets +\infty, \text{LB} \gets -\infty$.\;
        \While{$\text{UB} - \text{LB} > \epsilon$}{
            Solve \eqref{prob:CNL-master} to obtain new candidate solution $({\overline{\bx},\overline{\by},\overline{\bz},\overline{\bW},\overline{\btheta},\overline{\pmb{\phi}}})$.\;
            \comments{Add outer-approximation cuts for violated constraints}\;
             If any nonlinear constraints \eqref{eq:ctr-1}, \eqref{eq:ctr-2} or \eqref{eq:ctr-3} are violated, then add the corresponding outer-approximation cuts as in \eqref{oa-1}
, \eqref{oa-2} or \eqref{oa-3} \;
\comments{Add submodular cuts for violated constraints}\;
If $\overline{\phi}_t <\Phi_t(\overline{\bx})$, then add submodular cuts \eqref{sc-1} and \eqref{sc-2}, for any $t\in [T]$\;
\comments{Update the lower  and upper bounds}\;
                $\text{UB} \gets \sum_{t\in [T]} \phi_t, \text{LB} \gets \max \{\text{LB}, {F}(\overline{\bx})\}$.\;
          
        }
        \Return $\overline{\bx}$.\;
\end{algorithm}

Similar to prior work, the outer-approximation and submodular cuts can also be incorporated into a B\&C algorithm to solve the mixed-integer convex problem \citep{Ljubic2018outer,pham2024competitive}. Specifically, we maintain the master problem \eqref{prob:CNL-master} during the procedure. At each iteration, we check for violating constraints and add outer-approximation cuts as in \eqref{oa-1}-\eqref{oa-2}, \eqref{oa-3}, or submodular cuts as in \eqref{sc-1} and \eqref{sc-2} to the master problem using the \texttt{lazy-cut callback} procedure within solvers such as Gurobi and CPLEX.

\section{Experiments}\label{sec:experiments}
In this section, we present experimental results to evaluate the performance of our proposed methods for solving the MCP under the CNL model, as well as numerical results to compare across choice models. Additional experiments are included in the appendix to (i) illustrate the impact of various CNL parameters on the performance of our algorithm, and (ii) compare the use of outer-approximation and submodular cuts within the CP or B\&C procedures.

\subsection{Experimental Settings}
We utilize three benchmark datasets commonly used in previous MCP studies \citep{Ljubic2018outer,mai2020multicut,dam2022submodularity}:
\begin{itemize}
    \item \textbf{ORlib}: This dataset comprises 11 problem instances, including four instances with \((T,m) = (50,25)\), four instances with \((T,m) = (50,50)\), and three instances with \((T,m) = (1000,100)\). Instances from this dataset are labeled with the prefix ``\tt{cap}''.
    \item \textbf{HM14}: This dataset consists of 15 problem instances with \(T \in \{50, 100, 200, 400, 800\}\) and \(m \in \{25, 50, 100\}\). Instances from this dataset are labeled with the prefix ``\tt{OUR}''.
    \item \textbf{NYC}: This dataset comes from a large-scale park-and-ride location problem in New York City, featuring \(T = 82341\) and \(m = 59\). Given the substantial number of demand points, solving the MCP-CNL within a limited time budget is very challenging. Therefore, we use a subset of the demand points (\(T = 2000\)) for the evaluation while maintaining the same set of locations.
\end{itemize}

The maximum number of facilities to be opened, \( r \), ranges from 2 to 10. The deterministic part of the utility is calculated as \( v_{ti} = -\beta c_{ti} \) for a location \( i \in S \), and \( v_{ti'} = -\beta \alpha c_{ti'} \) for any competitor \( i' \in \cC \), where \( c_{ti'} \) is the cost between customer type \( t \in [T] \) and location \( i' \in \cC \). The parameter \(\alpha\), representing the competitiveness of competitors, is chosen from \(\{0.5, 1, 2\}\). The parameter \(\beta\), representing customer sensitivity to perceived utilities, is chosen from \(\{0.01, 0.05, 0.1\}\) for the \textbf{ORlib} dataset and from \(\{0.001, 0.005, 0.01\}\) for the \textbf{NYC} and \textbf{HM14} datasets. The \(\beta\) value for the \textbf{NYC} and \textbf{HM14} datasets is chosen to be smaller due to the higher cost between zones and locations compared to the \textbf{ORlib} set. Consequently, each problem has 81 different instances.

The CNL parameters are randomly generated as follows. The dissimilarity parameter \(\sigma_{tn}\) for nest \(\mathcal{N}_{n}^{t}\) is drawn from a normal distribution \(\mathcal{N}(\mu, \omega^2)\) with a mean \(\mu\) of 0.5 and a standard deviation \(\omega\) of 0.2. Values outside the interval \([0.1, 1]\) are clipped to the interval edges to ensure they remain within a reasonable range. Moreover, the number of nests, \(N\), is set to 5. Since the nests may overlap, we introduce a parameter \(\gamma \geq 1\), which represents the average number of nests to which a single location belongs. This parameter, referred to as the overlapping rate, directly influences how locations are shared among the different nests and will be used to control the degree of overlap across nests. 

In this experiment, the value of \(\gamma\) is set to $1.2$. This parameter influences the cross-nested correlation structure across \(m\) locations and \(N\) nests, which is constructed as follows. We begin by randomly assigning the \(m\) locations to the \(N\) nests. Next, we randomly sample \(\lceil (\gamma - 1) m \rceil\) locations from the \(m\) locations. These sampled locations are then randomly assigned to the \(N\) nests, ensuring that each location is assigned no more than once per nest and that each nest contains at least two distinct locations. The allocation parameter \(\alpha_{in}^t\) for \(i \in [m] \cup \cC\) is randomly generated from a uniform distribution over the interval \([0,1]\). These parameters are normalized such that \(\alpha_{in}^{t} = 0\) if location \(i\) is not a member of nest \(\mathcal{N}_{n}^t\) and \(\sum_{n \in [N]} \alpha_{in}^{t} = 1\). For simplicity, we assume that all customer types share the same set of nests, i.e., \(\mathcal{N}_{n}^1 = \mathcal{N}_{n}^2 = \ldots = \mathcal{N}_{n}^T\) for all \(n \in [N]\). However, we maintain differences in the allocation parameters across all customer types, allowing for variation in how different customer types perceive the utility of each location within the nests.


In this experiment, we will follow the more general configuration \CSH, where the competitors' facilities and the available locations \([m]\) share a common nested structure. As discussed, this approach allows for a more comprehensive and accurate correlation structure between available facilities in the market, compared to the \CSE configuration.

We will compare our approaches, specifically {\textbf{CP}} and {\textbf{B\&C}}, in solving the mixed-integer convex program in \eqref{prob:CNL-Convex-refor} using outer-approximation and submodular cuts. For comparison, we will also include a Direct Outer-Approximation method (\textbf{DOA}), which solves the non-convex problem in \eqref{prob:CNL-MCP} using the standard outer-approximation scheme employed in previous work \citep{mai2020multicut}. Specifically, we formulate \eqref{prob:CNL-MCP} as:
\[
\max_{\bx \in \{0,1\}^m}~~\{ \theta \mid \theta \leq F(\bx)\}
\]
and use the following outer-approximation cuts of the form \(\theta \leq \nabla_\bx F(\overline{\bx})^\top(\bx-\overline{\bx}) + F(\overline{\bx})\) during a CP procedure. It is important to note that \(F(\bx)\) is not concave in \(\bx\) (Proposition \ref{prop:non-concave}), so the CP cannot guarantee optimal solutions and thus becomes heuristic.

We also include a Greedy Heuristic (\textbf{GH}) for comparison. This \textbf{GH} starts with an empty solution and iteratively selects locations one by one to maximize the objective value. The greedy procedure stops when the maximum cardinality is reached, $|S| = r$. To improve the greedy solution, we perform some exchange steps, swapping a chosen location with another one outside the current set. As discussed, since the objective function, as a set function, is monotonically increasing and submodular, the \textbf{GH} algorithm guarantees a \((1 - 1/e)\) approximation solution \citep{nemhauser1981maximizing}. Recent studies on MCP show that such a greedy and local search procedure can achieve state-of-the-art performance in terms of returning the best solutions within short computing times, compared to other approaches \citep{dam2022submodularity,dam2023robust}. By considering \textbf{GH}, we aim to compare the performance of this heuristic approach with our exact methods.

The algorithms were written in C++, and the MILP models were solved by IBM ILOG CPLEX $22.1.1$, where the number of CPUs is set to $8$ cores. All experiments were run on $13$th Gen Intel(R) Core(TM) i$5$-$13500$ @ $4.8$ GHz, and $64$ GB of RAM. The CP algorithm terminates when it reaches the maximum number of iterations $nb_{iter}=10000$ or the running time exceeds $3600$ seconds. The B\&C algorithm is also configured with a maximum runtime of 3600 seconds for each instance.

\subsection{Numerical Comparison}
We compare our algorithms (\textbf{CP} and \textbf{B\&C}), implemented with both outer-approximation and submodular cuts, against two baselines: \textbf{DOA} and \textbf{GH}, on \(81 \times 27\) instances. Comparisons between the two types of cuts used in \textbf{CP} and \textbf{B\&C} are provided in the appendix.

To evaluate the performance of our approaches compared to the baselines, we report the number of times each approach returns optimal solutions or the best objective values. It is important to note that \textbf{DOA} and \textbf{GH} are heuristics and cannot guarantee optimal solutions, whereas our \textbf{CP} and \textbf{B\&C} methods can ensure optimal solutions if they terminate within the time budget (i.e., 1 hour). We also report the average computing times for all approaches.




\begin{table}[htb]
    \centering
    \caption{Numerical results for the competitive facility location under CNL model, computing times are given in seconds.}
    \label{tab:main-results}
    \scalebox{0.8}{
    \begin{tabular}{ccccccccccccc}
    \hline
        \multirow{2}{*}{Instance set}& \multirow{2}{*}{$T$} & \multirow{2}{*}{$m$} & \multicolumn{2}{c}{\textbf{DOA}} & \multicolumn{2}{c}{\textbf{GH}} & \multicolumn{3}{c}{\textbf{B\&C}} & \multicolumn{3}{c}{\textbf{CP}} \\ \cmidrule(lr){4-5}\cmidrule(lr){6-7}\cmidrule(lr){8-10}\cmidrule(lr){11-13}
        ~ & ~ & ~ & \#Best & Time& \#Best & Time & \#Best & \#Opt & Time & \#Best & \#Opt & Time \\ \hline
        cap101 & 50 & 25 & \textbf{81} & 0.08 & 62 & 0.00 & \textbf{81} & \textbf{81} & 0.11 & \textbf{81} & \textbf{81} & 0.36 \\ \hline
        cap102 & 50 & 25 & \textbf{81} & 0.09 & 74 & 0.00 & \textbf{81} & \textbf{81} & 0.09 & \textbf{81} & \textbf{81} & 0.32 \\ \hline
        cap103 & 50 & 25 & 56 & 0.09 & 51 & 0.00 & \textbf{81} & \textbf{81} & 0.10 & \textbf{81} & \textbf{81} & 0.59 \\ \hline
        cap104 & 50 & 25 & 77 & 0.08 & \textbf{81} & 0.00 & \textbf{81} & \textbf{81} & 0.09 & \textbf{81} & \textbf{81} & 0.41 \\ \hline
        cap131 & 50 & 50 & 52 & 0.23 & 7 & 0.01 & \textbf{81} & \textbf{81} & 0.26 & \textbf{81} & \textbf{81} & 1.29 \\ \hline
        cap132 & 50 & 50 & 74 & 0.16 & 63 & 0.01 & \textbf{81} & \textbf{81} & 0.14 & \textbf{81} & \textbf{81} & 0.69 \\ \hline
        cap133 & 50 & 50 & 63 & 0.14 & 54 & 0.01 & \textbf{81} & \textbf{81} & 0.12 & \textbf{81} & \textbf{81} & 0.85 \\ \hline
        cap134 & 50 & 50 & 75 & 0.19 & 42 & 0.01 & \textbf{81} & \textbf{81} & 0.14 & \textbf{81} & \textbf{81} & 0.66 \\ \hline
        capa & 1000 & 100 & \textbf{81} & 8.15 & 30 & 0.82 & 80 & \textbf{76} & 1389.03 & \textbf{81} & \textbf{76} & 1615.53 \\ \hline
        capb & 1000 & 100 & \textbf{81} & 18.69 & 3 & 0.82 & \textbf{81} & \textbf{79} & 1262.18 & 75 & 60 & 1850.24 \\ \hline
        capc & 1000 & 100 & \textbf{81} & 105.64 & 0 & 0.82 & 71 & \textbf{52} & 2155.58 & 62 & 26 & 3099.91 \\ \hline\hline
        OUR & 50 & 25 & \textbf{81} & 0.24 & 58 & 0.00 & \textbf{81} & \textbf{81} & 0.16 & \textbf{81} & \textbf{81} & 0.92 \\ \hline
        OUR & 50 & 50 & 70 & 0.28 & 48 & 0.01 & \textbf{81} & \textbf{81} & 0.19 & \textbf{81} & \textbf{81} & 1.38 \\ \hline
        OUR & 50 & 100 & \textbf{81} & 0.56 & 31 & 0.04 & \textbf{81} & \textbf{81} & 0.34 & \textbf{81} & \textbf{81} & 2.54 \\ \hline
        OUR & 100 & 25 & \textbf{81} & 0.16 & 67 & 0.01 & \textbf{81} & \textbf{81} & 0.34 & \textbf{81} & \textbf{81} & 2.49 \\ \hline
        OUR & 100 & 50 & \textbf{81} & 0.48 & 50 & 0.02 & \textbf{81} & \textbf{81} & 0.48 & \textbf{81} & \textbf{81} & 4.45 \\ \hline
        OUR & 100 & 100 & \textbf{81} & 2.86 & 51 & 0.08 & \textbf{81} & \textbf{81} & 2.60 & \textbf{81} & \textbf{81} & 21.41 \\ \hline
        OUR & 200 & 25 & \textbf{81} & 0.78 & 53 & 0.02 & \textbf{81} & \textbf{81} & 1.99 & \textbf{81} & \textbf{81} & 19.08 \\ \hline
        OUR & 200 & 50 & 66 & 0.39 & 58 & 0.05 & \textbf{81} & \textbf{81} & 2.01 & \textbf{81} & \textbf{81} & 6.42 \\ \hline
        OUR & 200 & 100 & \textbf{81} & 13.03 & 4 & 0.17 & \textbf{81} & \textbf{81} & 15.47 & \textbf{81} & \textbf{81} & 226.64 \\ \hline
        OUR & 400 & 25 & \textbf{81} & 0.33 & 62 & 0.03 & \textbf{81} & \textbf{81} & 7.72 & \textbf{81} & \textbf{81} & 18.76 \\ \hline
        OUR & 400 & 50 & \textbf{81} & 0.61 & 60 & 0.10 & \textbf{81} & \textbf{81} & 6.89 & \textbf{81} & \textbf{81} & 43.06 \\ \hline
        OUR & 400 & 100 & \textbf{81} & 12.72 & 24 & 0.34 & \textbf{81} & \textbf{81} & 113.13 & \textbf{81} & 79 & 538.85 \\ \hline
        OUR & 800 & 25 & \textbf{81} & 0.38 & \textbf{81} & 0.06 & \textbf{81} & \textbf{81} & 75.86 & \textbf{81} & \textbf{81} & 143.43 \\ \hline
        OUR & 800 & 50 & \textbf{81} & 0.84 & 59 & 0.19 & \textbf{81} & \textbf{81} & 135.40 & \textbf{81} & \textbf{81} & 189.47 \\ \hline
        OUR & 800 & 100 & \textbf{81} & 17.00 & 58 & 0.66 & \textbf{81} & \textbf{79} & 1128.49 & 76 & 61 & 1977.20 \\ \hline \hline
        NYC & 2000 & 59 & \textbf{76} & 2.46 & 61 & 0.64 & \textbf{76} & \textbf{72} & 511.92 & \textbf{76} & 60 & 1678.24 \\ \hline        \hline
        Average & ~ & ~ & 76.56	& ~	& 47.85 & ~ & \textbf{80.41} & \textbf{79.26} & ~ & 79.70 & 76.41 & ~\\\hline
    \end{tabular}}
\end{table}

Table \ref{tab:main-results} presents our comparison results. Here, \#Best refers to the number of times each approach returns the best objective values (across all approaches), and \#Opt refers to the number of times each approach returns optimal solutions (applicable only for the exact methods \textbf{CP} and \textbf{B\&C}). The highest numbers of best and optimal solutions found over all approaches are highlighted in bold.
All computing times are reported in seconds. Each row of the table reports the averages for 81 instances, grouped by the number of customer types \(T\) and the number of locations \(m\). 

We first observe that, although \textbf{DOA} converges very quickly in most instances, it fails to return optimal solutions for many smaller instances. For example, for \texttt{cap103} instances, \textbf{DOA} terminates quickly but only returns optimal solutions for 56 out of 81 instances. This directly confirms the non-concavity stated in Proposition \ref{prop:non-concave}. 

The results in Table \ref{tab:main-results} also indicate that our approaches, \textbf{CP} and \textbf{B\&C}, outperform \textbf{DOA} and \textbf{GH} in terms of returning the best solutions. \textbf{CP} and \textbf{B\&C} return the best values for approximately 79.70 out of 81 and 80.41 out of 81 instances, respectively, across all instances. For small-sized instances with \(T \leq 400\), our approaches consistently return the best solutions in all cases. For larger-sized instances, \textbf{CP} and \textbf{B\&C} maintain strong performance. \textbf{B\&C} finds the best solutions in at least 71 out of 81 instances, while for \textbf{CP} there are 62 out of 81 instances (on average) are solved with the best solutions across seven settings: \(T = 800\) from \textbf{HM14}, \(T = 1000\) from \textbf{ORlib}, and \(T = 2000\) from \textbf{NYC}. The \textbf{GH} approach, while very fast as expected, performs poorly in terms of returning the best solutions, with an average of only 47.85 out of 81 instances yielding the best solutions.

Regarding optimality, we observe that most instances with \(T \leq 400\) are optimally solved by our exact methods, except for the \textbf{HM14} dataset with \(T = 400\) and \(m = 100\), where \textbf{CP} still has 2 instances not solved to optimality. However, for these instances, the optimal solution is still found. For larger values of \(T\), the proposed algorithms also perform well, solving at least 60 out of 81 instances to optimality for most larger instances.  Notably, the \texttt{capc} instances from \textbf{ORlib} appear to be the most challenging, as our exact methods solve only 52 out of 81 instances (for \textbf{B\&C}) and 26 out of 81 instances (for \textbf{CP}) to optimality.


Regarding the computing times, all instances with \(T \leq 400\) are solved to optimality very quickly, in less than 2 minutes for \textbf{B\&C} and less than 10 minutes for \textbf{CP}. Additionally, for instances where \(m \leq 50\), even large instances are solved mostly in under 3 minutes. The only exception is the \textbf{CP} algorithm, which takes an average of 189.47 seconds for the \((T, m) = (800, 50)\) instances. This highlights the impact of \(m\) on the computing time of the algorithms. Notably, when examining the average computing time for the \textbf{NYC} instances compared to the \tt{capa}, \tt{capb}, and \tt{capc} instances from the \textbf{ORlib} dataset, we observe that, despite having twice the number of customer types (2000 compared to 1000) but fewer locations (59 compared to 100), the average computation time on the \textbf{NYC} dataset is still significantly lower than that of the \tt{capa}, \tt{capb}, and especially \tt{capc} instances. The computing times for \textbf{GH} and \textbf{DOA} are significantly lower compared to our exact methods. \textbf{GH} requires less than a second to terminate, which is expected as it only involves a few steps of adding or exchanging locations. \textbf{DOA} also maintains a simple master problem with a much lower number of cuts added after each iteration, resulting in faster convergence times. However, it is worth noting that,  since the cuts added by \textbf{DOA} do not necessarily outer-approximate the objective function \(F(\bx)\), they may inadvertently cut off optimal solutions, causing the algorithm to stop at a local solution. This explains why \textbf{DOA}, despite its speed, does not consistently achieve optimal results.

\subsection{Comparison across Different Choice Models}

The use of the CNL model in decision-making offers the theoretical advantage of encompassing many other demand models as special cases (e.g., the MNL or NL) and being able to approximate many other general demand models, such as the MMNL or ranking-preference models \citep{fosgerau2013choice,LeC2024POM}. In practice, several prior studies have shown that the CNL model significantly outperforms other choice models in predicting people's demand \citep{beine2021new,ding2015cross}.
However, applying the CNL model in the context of the MCP comes with the cost of increased computational complexity, as demonstrated in the previous section. In this section, we aim to explore the extent of the performance loss incurred by simplifying the CNL model to simpler models such as the MNL or NL within the MCP context. To this end, we consider MCP-CNL instances and simplify the CNL model into its MNL or NL counterparts. We then solve these MNL-based and NL-based variants to obtain solutions. These solutions will be used to evaluate the loss associated with using such simplified models.

Specifically, we simplify the MCP-CNL instance to an MNL-based instance by setting all the dissimilarity parameters \(\pmb{\sigma}\) to $1$, i.e., \(\sigma_{tn} = 1\) for all \(t \in [T]\) and \(n \in [N]\). This results in the following MNL-based MCP to solve:
	\begin{align}
 		\max_{\bx}\qquad &\left\{\sum_{t\in [T]}q_t\frac{\sum_{i\in [m]}e^{v_{ti}} x_i}{ \sum_{i\in [m]}e^{v_{ti}} x_i + \sum_{i\in\cC} e^{v_{ti}}}\right\}\nonumber \\
        \text{subject to} \quad  & \sum_{i\in [m]} x_i \leq r\nonumber\\ 
        & \bx \in \{0,1\}^m\nonumber
	\end{align}

For the NL-based variant, since each location/facility can belong to only one nest, we need to assign each location/facility \(i \in [m]\) to the nest to which \(i\) has the highest membership value. Specifically, for each \(t \in [T]\) and \(i \in [m]\), we take \(n^* = \text{argmax}_{n \in [N]} \alpha^t_{in}\) and assign location \(i\) to nest \(n^*\) (i.e., set \(\alpha^t_{in} = 1\) if \(n = n^*\) and \(\alpha^t_{in} = 0\) otherwise). By doing this, we ensure each location \(i\) belongs to only one nest. We also retain the same dissimilarity parameters \(\pmb{\sigma}\) as in the CNL instances.

Each MNL-based or NL-based instance is then solved to optimality using our general exact method described in Section \ref{sec:methods}. Let \(\bx^{\MNL}\) and \(\bx^{\NL}\) be the optimal solutions to the MNL-based and NL-based instances, respectively. The percentage loss is then computed as follows:
 \[
 \tt{[\% Loss]} = \frac{F^* - F(\bx^{\MNL})}{F^*}\times 100\%; \text{ or }  \tt{[\% Loss]} = \frac{F^* - F(\bx^{\NL})}{F^*}\times100\%
 \]
where $F^*$ is the optimal value of the corresponding CNL instance. 
 
\begin{table}[htb]\footnotesize
\centering
\caption{Comparison results with other choice models.}
\begin{tabular}{ccccc}
\hline
        \multirow{2}{*}{Instance set} & \multirow{2}{*}{$T$} & \multirow{2}{*}{$m$} & \multicolumn{1}{c}{MNL} & \multicolumn{1}{c}{NL} \\ \cmidrule(lr){4-4}\cmidrule(lr){5-5}
        ~ & ~ & ~ & \%Loss & \%Loss \\ \hline
        cap101 & 50 & 25 & 4.18 & 7.28 \\ \hline
        cap102 & 50 & 25 & 4.83 & 7.77 \\ \hline
        cap103 & 50 & 25 & 1.68 & 5.65 \\ \hline
        cap104 & 50 & 25 & 3.19 & 6.57 \\ \hline
        cap131 & 50 & 50 & 3.50 & 7.75 \\ \hline
        cap132 & 50 & 50 & 4.77 & 8.14 \\ \hline
        cap133 & 50 & 50 & 4.59 & 8.01 \\ \hline
        cap134 & 50 & 50 & 3.54 & 8.36 \\ \hline\hline
        OUR & 50 & 25 & 3.67 & 6.30 \\ \hline
        OUR & 50 & 50 & 4.68 & 7.38 \\ \hline
        OUR & 50 & 100 & 5.86 & 8.09 \\ \hline
        OUR & 100 & 25 & 4.01 & 6.79 \\ \hline
        OUR & 100 & 50 & 5.49 & 8.17 \\ \hline
        OUR & 100 & 100 & 5.90 & 8.41 \\ \hline\hline
        Average & ~ & ~ & 4.28 & 7.48\\\hline
    \end{tabular}
\label{tab:comparison-models}
\end{table}

We conducted tests on small instances with \(T \leq 100\) from the \textbf{ORlib} and \textbf{HM14} datasets to ensure all instances are solved optimally. The percentage losses are presented in Table \ref{tab:comparison-models}, where each row shows average values for 81 instances. The results indicate significantly high percentage losses for all instances, with an average of 4.28\% for the MNL solutions and 7.48\% for the NL ones. Interestingly, the losses from the MNL solutions are remarkably higher than those from the NL model, suggesting that the simpler MNL model can achieve better solutions compared to the NL counterpart when the ground-truth model is the CNL. Additionally, due to its simpler structure, MNL-based variants are generally faster to solve compared to the NL and CNL-based MCP instances.



\section{Conclusion}\label{sec:conclusion}
In this paper, we studied a competitive facility location problem where customer behavior is forecasted using the CNL customer choice model. By incorporating the CNL, we have made a significant advancement in the literature on competitive facility location, as the CNL model is considered one of the most flexible and general choice models in the demand modeling literature. It encompasses many other choice models as special cases (e.g., the MNL or NL models) and can arbitrarily approximate many other general choice models (e.g., the MMNL or ranking-preference models). 

The facility location problem under the CNL is highly complex, and we demonstrated that it is \tt{NP-hard} even with only one customer class and two nests. We further delved into the problem structure and showed that when the competitor’s facilities are separated from opening facilities in the nested structure (i.e., \CSE), the objective function exhibits concavity. However, in a general configuration where the competitor’s facilities share a common nested structure with opening facilities, the objective function is not concave, making it challenging to solve to optimality. Interestingly, we show that the non-convex problem under \CSH can be equivalently converted into a mixed-integer convex program. We then derived outer-approximation and submodular cuts that can be incorporated into a CP or B\&C procedure to exactly solve the problem. Extensive experiments demonstrated the practicality and efficiency of our approaches in solving large-scale instances. We also provided numerical results to analyze the impact of the CNL parameters and the losses when simplifying the cross-nested correlation structure in the context of the MCP.

Future work will explore more advanced (but more complex) settings, such as the MCP under multi-level cross-nested models \citep{daly2006general,mai2017dynamic} or a joint facility and price optimization under the CNL model or other advanced choice models.

\bibliographystyle{plainnat_custom}
\bibliography{refs}

\pagebreak

\appendix

\section*{Appendix}

In this appendix, we include proofs that were not presented in the main paper. Additionally, we provide further numerical results to assess the impact of the CNL parameters on the performance of our algorithms. We also present comparison results for the outer-approximation and submodular cuts when being incorporated into the CP and B\&C procedures.

\section{Missing Proofs}
\subsection{Proof of Proposition \ref{prop:NPhard-MNL}}

\begin{proof}
    Under the MNL choice model and when $T=1$, the objective function can be written as:
    \[
    F(\bx) = q_1 \frac{\sum_{i\in [m]} x_ie^{v_{1j}}}{U^c_1 + \sum_{i\in [m]} x_ie^{v_{1i}}} = q_1 -\frac{q_1U^c_1}{U^c_1 + \sum_{i\in [m]} x_ie^{v_{1i}}},
    \]
To maximize \( F(\bx) \), we need to maximize the sum \( U^c_1 + \sum_{i \in [m]} x_i e^{v_{1i}} \). Given the cardinality constraint \( \sum_{i \in [m]} x_i \leq r \), this can be achieved by sorting the list of utilities \( \{v_{11}, \ldots, v_{1m}\} \) in decreasing order and selecting the top \( r \) locations with the highest utilities. This sorting process can be performed in \( \mathcal{O}(m \log m) \) using a standard sorting algorithm.

For the case that $T=2$, let us prove the NP-hardness by connecting the problem with the set partition problem, which is known to be NP-hard. We first write the objective function as
\begin{align}
    F(\bx) &= q_1 \frac{\sum_{i\in [m]} x_ie^{v_{1j}}}{U^c_1 + \sum_{i\in [m]} x_ie^{v_{1i}}} + q_2 \frac{\sum_{i\in [m]} x_ie^{v_{2j}}}{U^c_1 + \sum_{i\in [m]} x_ie^{v_{2i}}} \label{eq:MNL-2type} \\
    &= (q_1 + q_2) - \left(\frac{U^c_1}{U^c_1 + \sum_{i\in [m]} x_i{V_{1i}}}  +\frac{U^c_2}{U^c_2 + \sum_{i\in [m]} x_i{V_{2i}}}\right)
\end{align}
where $V_{ti} = e^{v_{ti}}$, for all $t\in \{1,2\}, i\in [m]$, for notational simplicity. It  more convenient to define the objective function as a subset function as:
\[
F(S) = (q_1 + q_2) - \left(\frac{q_1U^c_1}{U^c_1 + \sum_{i\in S} {V_{1i}}}  +\frac{q_2U^c_2}{U^c_2 + \sum_{i\in S} {V_{2i}}}\right)
\]
 and convert the MCP as  a minimization problem:
 \begin{equation}
    \label{prob:FL-MNL} \tag{\sf $P^r$}
    \min~\left\{ G(S) = \frac{q_1U^c_1}{U^c_1 + \sum_{i\in S} {V_{1i}}}+ \frac{q_2U^c_2}{U^c_2 + \sum_{i\in S} {V_{2i}}}\Bigg|~ |S| = r\right\} 
    \end{equation}
    
To prove the hardness result, we will show that the \textit{Set Partition} problem can be converted into a set of $m$ instances of   \eqref{prob:FL-MNL} (so solving the Set Partition problem can be done by solving $m$ times \eqref{prob:FL-MNL}).  

Let us first introduce the Set Partition problem as below:

\paragraph{The Set Partition problem:}\textit{ There are $m$ integer numbers $a_1,\ldots,a_m$ such that $\sum_{j\in [m]}=2K$, is it possible to partition $[m]$ into two disjoint subsets $S_1, S_2$ such that $\sum_{j\in S_1}a_j = \sum_{j\in S_2}a_j = K$?}.

We now identify $m$  instances of \eqref{prob:FL-MNL} such that, by solving these $m$ instances, we solve the Set Partition problem above. For each $r\in [m]$, we chose a set of parameters  $\Theta^r$ as follows
\begin{equation}
    \Theta^r = \left\{ \Big(q_1,q_2,U^c_1,U^c_2, \{(V_{1j},V_{2j}),~ j\in  [m]\}\Big)~ \text{ s.t. }
    \begin{cases}
    V_{1j} = a_j; ~\forall j\in [m] \\
    V_{2j} = \max_j \{a_j\} - a_j +1;~\forall j\in [m] \\
    U^c_1 = r+ r \times \max_j \{a_j\} \\
    U^c_2 =  2K\\
     q_1= 1/U^c_1;~ q_2 =1/U^c_2\\
    \end{cases}
    \right\}
\end{equation}
So, for each set of parameters $\Theta^r$, the objective function becomes. 
   \begin{align*}       
     G(S) &= \frac{1}{r+ r\max_j \{a_j\} + \sum_{j\in S} a_j}+ \frac{1}{ 2K + \sum_{j\in S} (\max_j\{a_j\} +1 - a_j)}\\
     &=
     \frac{1}{r+ r\max_j \{a_j\} + \sum_{j\in S} a_j}+ \frac{1}{ 2K + r+ r\max_j\{a_j\} - \sum_{j\in S} a_j}
   \end{align*}
We now consider the following univariate function
$$
g(x) =\frac{1}{r+ r\max_j \{a_j\} + x} + \frac{1}{ 2K + r+ r\max_j\{a_j\} - x}  
$$
Taking its first-order derivative  we get
\begin{align*}
\frac{\partial g(x)}{\partial x} &= \frac{1}{ ( 2T + r+ r\max_j\{a_j\} - x)^2}   - \frac{1}{(r+ r\max_j \{a_j\} + x)^2} 
\end{align*}
Then we see that $\partial G(x)/\partial x = 0$ if and only if 
$$
2T + r+ r\max_j\{a_j\} - x = r+ r\max_j \{a_j\} + x
$$
or $x = K$. Moreover, $\partial g(x)/\partial x \leq 0$ if $x \geq K$, and  $\partial g(x)/\partial x \geq 0$ if $x \leq K$. Thus the function $g(x)$ achieves its unique maximum value at $x=K$.

We now get back to the problem $\max_{S}\{G(S)|~ |S|=r\}$. From the above results, we can see that if the partition problem has a solution, then solving $m$ problem \eqref{prob:FL-MNL} with $r = 1,\ldots,m$, each with the parameter set $\Theta^r$, will return at least one solution $\overline{S}$ such that $\sum_{j\in \overline{S}} = K$. On the other hand, if the Set Partition problem has no solution, then there is no solution $S$ to any problem instance in $\{\text{\eqref{prob:FL-MNL}}|~ r\in [m]\}$  such that $\sum_{j\in \overline{S}}a_j = K$. This confirms the reduction from the Set Partition problem into problem \eqref{prob:FL-MNL}, validating the {\sf NP-harness}.

\end{proof}

\subsection{Proof of Proposition \ref{prop:NL-poly}}
\begin{proof}
    Under the NL model, each facility/location can belong to only one nest. Thus, for each facility/location $i\in [m]$, there is only one nest $n\in [N]$ such that the membership values $\alpha_{in} = 1$, and  $\alpha_{in'}=0$ for all $n'\neq n$. The objective function can be written in a compact form as:
    \[
    F(\bx) = q_1\frac{\sum_{n\in [N]} W_n^{\sigma_n-1}(\sum_{i\in\cN_n} x_iV_i)}{\sum_{n\in [N]} W_n^{\sigma_n}}
    \]
   where \( V_{i} = V_{1in} \), \( W_{n} = W_{1n} \), \( \cN_n = \cN^1_n\) and \( \sigma_{n} = \sigma_{1n} \) for all \( i \in [m], n \in [N] \). Some subscripts are redundant and have been removed for simplicity of notation. This is possible because \( T=1 \) and for each option \( i \in [m] \), there is only one nest \( \cN_n \) such that \( i \in \cN_n \). Since 
   $W_n = \sum_{i \in \cN_n}x_iV_{i} + \sum_{i\in \cC \cap \cN_n} V_i$, we  can further write the objective function as:
   \[
   F(\bx) = q_1 U^c - q_1\frac{\sum_{n\in [N]} W_n^{\sigma_n-1}U^c_n}{\sum_{n\in [N]} W_n^{\sigma_n}}
  \]

where $U^c_n = \sum_{i\in \cC \cap \cN_n}  V_{i}$  (for notational simplicity). To prove that the problem \( \{\max_{\bx} F(\bx) \mid \bx \in \{0,1\}^m, \sum_{i \in [m]} x_i = r \} \) can be solved in polynomial time, we will construct a two-step approach. In the first step, we convert the problem into a bisection procedure, where each iteration involves solving a sub-problem with a simpler structure. In the second step, we demonstrate that each sub-problem can be solved in polynomial time.

For the first step, we write  convert the maximization problem into a minimization one as
\begin{equation}\label{eq-NL-min}
    \min_{\bx\in \{0,1\}^m} \left\{\frac{\sum_{n\in [N]} W_n^{\sigma_n-1}U^c_n}{\sum_{n\in [N]} W_n^{\sigma_n}}\Big|~ \sum_{i}x_i = r\right\}
\end{equation}
   which is equivalent to:
   \begin{equation}\label{eq:bisection-prob}
       \min \left\{\delta > 0|~\exists \bx \in \cX\text{ s.t. } \cG(\bx,\delta) = {\sum_{n\in [N]} W_n^{\sigma_n-1}U^c_n} - {\sum_{n\in [N]} \delta W_n^{\sigma_n}} \leq 0\right\}
   \end{equation}
We can solve \eqref{eq-NL-min} by performing a binary search over \(\delta\), where at each iteration we determine whether there is a binary solution \(\bx\) such that \(\sum_{i \in [m]} x_i = r\) and \(\cG(\bx, \delta) < 0\). If this is not the case, we decrease \(\delta\); otherwise, we increase it. At each iteration, to answer this question, we need to solve the following sub-problem:
\begin{equation}\label{eq:NL-subprob}
    \min_{\bx \in \cX} \{\cG(\bx, \delta)\}
\end{equation}
Using dynamic programming techniques, the following lemma demonstrates that we can solve \eqref{eq:NL-subprob} in polynomial time.

\begin{lemma}
The problem $\min_{\bx \in \cX} \{\cG(\bx, \delta)\}$ can be solve to optimality in $\cO(r^2m^2)$ for any $\delta>0$.
\end{lemma}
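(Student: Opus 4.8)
The plan is to exploit the structure special to the NL model: the nests $\cN_1,\ldots,\cN_N$ partition the location set $[m]$, so each facility sits in exactly one nest and $\cG(\bx,\delta)$ separates across nests. First I would write
\[
\cG(\bx,\delta) = \sum_{n\in[N]} g_n(W_n), \qquad g_n(W) = U^c_n W^{\sigma_n-1} - \delta W^{\sigma_n},
\]
where $W_n = \sum_{i\in\cN_n\cap[m]} x_i V_i + U^c_n$ depends only on the facilities opened inside nest $n$. Because the nests are disjoint, choosing $\bx$ amounts to independently choosing a subset of each nest, tied together only by the global cardinality constraint $\sum_n|\{i\in\cN_n: x_i=1\}| = r$.

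The key step is to observe that each $g_n$ is monotonically decreasing in $W_n$ on $W_n>0$. This follows at once from $\sigma_n\le 1$: the term $U^c_n W^{\sigma_n-1}$ has a non-positive exponent and is therefore non-increasing, while $-\delta W^{\sigma_n}$ is strictly decreasing (as $\sigma_n>0$, $\delta>0$), so their sum decreases. Consequently, once the number $k_n$ of facilities opened in nest $n$ is fixed, minimizing $g_n(W_n)$ is equivalent to maximizing $W_n$, which is achieved by selecting the $k_n$ facilities of largest $V_i$ within $\cN_n$. I would therefore presort each nest by $V_i$ and precompute, via prefix sums, the value $W_n(k_n)$ (the sum of the top $k_n$ utilities plus $U^c_n$) and hence $g_n(W_n(k_n))$ for every $n$ and every $0\le k_n\le|\cN_n\cap[m]|$; this preprocessing costs $\cO(m\log m)$.

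With the within-nest choice reduced to a single integer $k_n$, the problem \eqref{eq:NL-subprob} becomes the allocation problem $\min\{\sum_n g_n(W_n(k_n)) : \sum_n k_n = r\}$, which I would solve by dynamic programming over nests. Defining $D[n][k]$ as the minimum total contribution of the first $n$ nests using exactly $k$ opened facilities, with recursion $D[n][k] = \min_{0\le j\le\min(k,|\cN_n\cap[m]|)}\big(D[n-1][k-j] + g_n(W_n(j))\big)$ and answer $D[N][r]$, each of the $\cO(Nr)$ states is computed in $\cO(r)$ time, giving an $\cO(Nr^2)$ dynamic program. Since $N\le m$, this lies within the claimed $\cO(r^2m^2)$ bound. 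Nests that contain no facility from $[m]$ have a fixed $W_n=U^c_n$, contribute the constant $g_n(U^c_n)$, and are simply added at the end.

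The main obstacle is the monotonicity of $g_n$: without it, picking the best subset of a prescribed size inside a nest would be a combinatorial search rather than a top-$k$ selection, and the separation would not yield a polynomial algorithm. Beyond this I would only need to dispatch the boundary technicalities — namely that $W_n>0$ on every nest actually used (guaranteed since all $V_i = e^{v_i/\sigma_n}>0$, so $W_n>0$ whenever $k_n\ge1$ or $U^c_n>0$), so that $g_n$ and its monotonicity are well defined — which are routine.
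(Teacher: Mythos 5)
Your proposal is correct and follows essentially the same route as the paper's own proof: decompose $\cG(\bx,\delta)$ across the disjoint nests, use the monotonicity of each per-nest term (decreasing in $W_n$ since $\sigma_n \le 1$ and $\delta > 0$) to reduce the within-nest choice to a top-$k$ selection by utility, and allocate the cardinality budget $r$ across nests via dynamic programming. Your presorting/prefix-sum refinement gives a slightly tighter $\cO(Nr^2 + m\log m)$ running time than the paper's per-state sorting (which yields $\cO(r^2 m^2)$ directly), but this is an implementation detail rather than a different argument.
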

 \begin{proof}
   We write the objective function as 
\[ \cG(\bx, \delta) = \sum_{n \in [N]} \phi_n(W_n, \delta) \]
where \(\phi_n(W_n, \delta) = W_n^{\sigma_n - 1} - \delta W_n^{\sigma_n}\). Since \(\sigma_n \leq 1\) and \(\delta > 0\), we see that \(\phi_n(W_n)\) is decreasing in \(W_n\). Given the cardinality constraint \(\sum_i x_i = r\), we will formulate the sub-problem as the following dynamic program. Let \(N\) be the number of stages, where at each stage \(n \in [N]\) we need to select locations for nest \(n\). Let us define the following value function:
\[ \cV(n, w) = \min \left\{\sum_{l=n}^N \phi_n(W_n, \delta) \Big| \sum_{l=n}^N \sum_{i \in \cN_n \cup [m]} x_i \leq w \right\} \]
In other words, the value function \(\cV(n, w)\) represents the minimum value from nest \(n\) to nest \(N\) where there are \(w\) locations remaining to be selected. It can be seen that $$\cV(1, r) = \min_{\bx \in \cX} \left\{ \sum_{n \in [N]} \phi_n(W_n, \delta) \right\}.$$ Moreover, the value function \(\cV(n, w)\) satisfies the following Bellman equation:
 \begin{equation}
     \cV(n,w) = \begin{cases}
         \min_{u=0,\ldots, w}\left\{\min_{\substack{x_i,i\in \cN_n\cup [m]\\\sum_{i\in \cN_n\cup [m]}x_i \leq u}}\Phi_n(W_n,\delta) + \cV({n+1,w-u})\right\} & \forall n<N, w>0\\
         0 & \text{if  $n=N$ or $w=0$} 
     \end{cases}
 \end{equation}
 Moreover, the subproblem 
 \[
 \max_{\substack{x_i,i\in \cN_n\cup [m]\\\sum_{i\in \cN_n\cup [m]}x_i \leq u}}\Phi_n(W_n,\delta) 
 \]
This is equivalent to finding the maximum value of \(W_n\) such that \(W_n = \sum_{i \in \cN_n \cup [m]} x_i V_i + \sum_{i \in \cC \cap \cN_n} V_i\) and \(\sum_{i \in \cN_n \cup [m]} x_i \leq u\). This can be done easily by sorting \(V_i\) for all \(i \in \cN_n \cup [m]\) in decreasing order and selecting the top \(w\) highest values. This can be performed in \(\cO(|\cN_n \backslash \cC|^2)\). Thus, if the value function is given at stage \(n+1\), computing \(\cV(n, w)\) will take \(\cO(w(|\cN_n \backslash \cC|)^2)\). Consequently, the computation of \(\cV(n, w)\) for all \(w = 0, \ldots, r\) takes about \(\cO(r^2 |\cN_n \backslash \cC|^2)\).

To compute \(\cV(1, r)\), we can calculate \(\cV(n, w)\) backwards from \(\cV(N, w), \cV(N-1, w), \ldots\). At each stage, the computation can be done in \(\cO(r^2 |\cN_n \backslash \cC|^2)\), implying that solving the Bellman system will take about
\[ \cO\left(\sum_{n \in [N]} \cO(r^2 |\cN_n \backslash \cC|^2)\right) \]
Since \(|\cN_n \backslash \cC|^2\) can be bounded from above by \((\sum_{n} |\cN_n \backslash \cC|)^2 \leq m^2\), the Bellman equation can be solved in \(\cO(r^2 m^2)\). This completes the proof.

 \end{proof}

We now return to the main proof. The binary search guarantees that, given any \(\epsilon > 0\), after \(\log(1/\epsilon)\) iterations, it can find a \(\widehat{\delta}\) such that \(|\widehat{\delta} - \delta^*| \leq \epsilon\) and  
\[
\min_{\bx \in \cX} \cG(\bx, \widehat{\delta}) \leq 0 
\]
where \(\delta^*\) is the optimal solution to \eqref{eq:bisection-prob}, which is also the optimal value of the main problem \eqref{eq-NL-min}. This implies that there exists \(\widehat{\bx} \in \cX\) such that \(G(\widehat{\bx}, \widehat{\delta}) \leq 0\), or 
\[
 \delta^*  \leq \frac{\sum_{n \in [N]} W_n(\widehat{\bx})^{\sigma_n - 1} U^c_n}{\sum_{n \in [N]} W_n(\widehat{\bx})^{\sigma_n}} \leq \widehat{\delta} \leq \delta^* +  \epsilon
\]
where \(W_n(\widehat{\bx}) = \sum_{i \in [m]} \widehat{x}_i V_i + U^c_n\), which directly implies that
\[
\left| \frac{\sum_{n \in [N]} W_n(\widehat{\bx})^{\sigma_n - 1} U^c_n}{\sum_{n \in [N]} W_n(\widehat{\bx})^{\sigma_n}} - \delta^*\right| \leq \epsilon.
\]
As a result,  \(\widehat{\bx}\) is a \((\epsilon)\)-optimal solution of the maximization problem \(\max_{\bx \in \cX} F(\bx)\), i.e.,
\[
 F(\widehat{\bx}) \geq \max_{\bx \in \cX} F(\bx) - \epsilon
\]
as desired. Therefore, we can confirm that an \(\epsilon\)-optimal solution to \eqref{prob:CNL-MCP} can be found in \(\cO(m^2 r^2 \log(1/\epsilon))\).

\end{proof}

\section{Additional Experiments}

\subsection{Impact of CNL Parameters}

We conduct experiments to examine the impact of CNL parameters on the performance of our algorithms. In the CNL model, there are two key parameters influencing the complexity of the MCP-CNL: the dissimilarity parameter \(\pmb{\sigma}\) and the overlapping rate \(\gamma\). The dissimilarity parameter \(\pmb{\sigma}\) directly affects the nonlinearity of the objective function, distinguishing the CNL model from the simpler MNL model. On the other hand, the overlapping rate \(\gamma\) indicates the degree of overlap between nests, affecting how shared or distinct the locations are across different nests. By solving the MCP-CNL with varying \(\pmb{\sigma}\) and \(\gamma\), we aim to analyze how these parameters impact the complexity of the facility location problem. Specifically, we investigate how changes in \(\pmb{\sigma}\) affect the model's nonlinearity and how different levels of \(\gamma\) influence the correlation structure between locations. Through this analysis, we seek to understand the sensitivity of our algorithms to these parameters and identify any trends or patterns that could inform better decision-making in competitive facility location planning.

We select small-sized instances with \(T \leq 100\) from the \textbf{ORlib} and \textbf{HM14} datasets, setting the parameters \(\alpha\) and \(\beta\) to 1 and 0.01, respectively. For each instance, we vary the mean of the dissimilarity parameter \(\pmb{\sigma}\) and the overlapping rate \(\gamma\), and solve the corresponding MCP-CNL instances using \textbf{CP} with outer-approximation cuts. Since these instances are small, \textbf{CP} consistently terminates in less than 1 hour, returning optimal solutions. We then report the average computing time for each parameter setting.

\begin{figure}[!httb]
    \centering
    \begin{subfigure}[b]{0.45\textwidth}
        \centering
        \includegraphics[width=\linewidth]{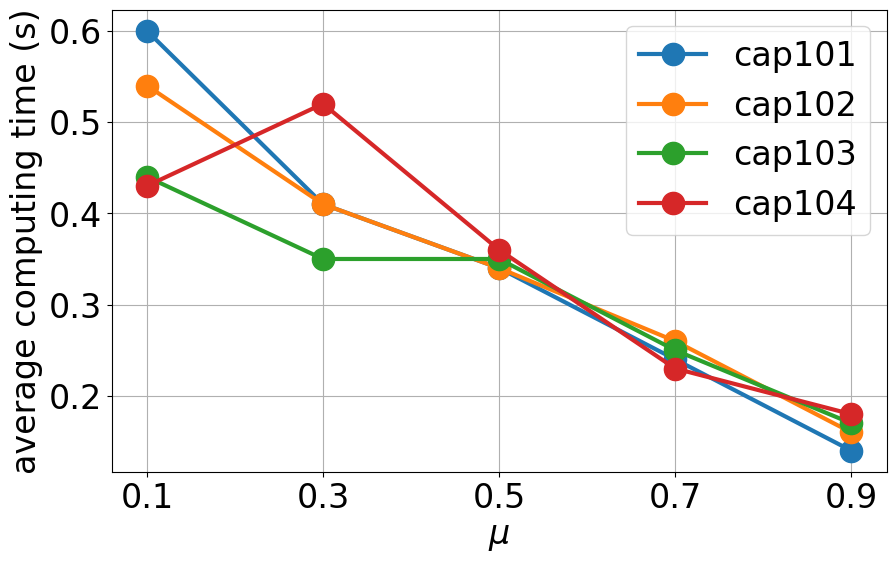}
        \caption{\textbf{ORlib} dataset with $(T,m) = (50,25)$.}
    \end{subfigure}
\hfill
    \begin{subfigure}[b]{0.45\textwidth}
        \centering
        \includegraphics[width=\linewidth]{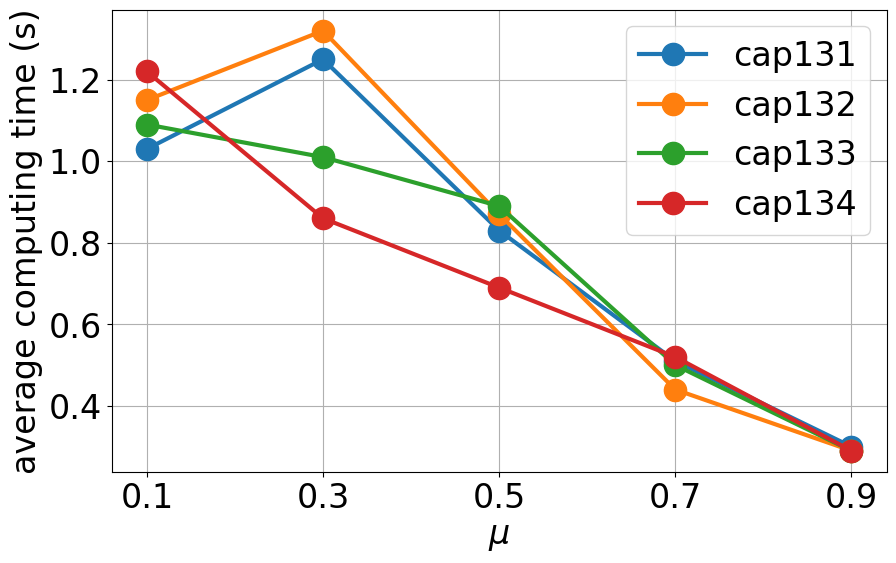}
        \caption{\textbf{ORlib} dataset with $(T,m) = (50,50)$.}
    \end{subfigure}
    \begin{subfigure}[b]{0.45\textwidth}
        \centering
        \includegraphics[width=\linewidth]{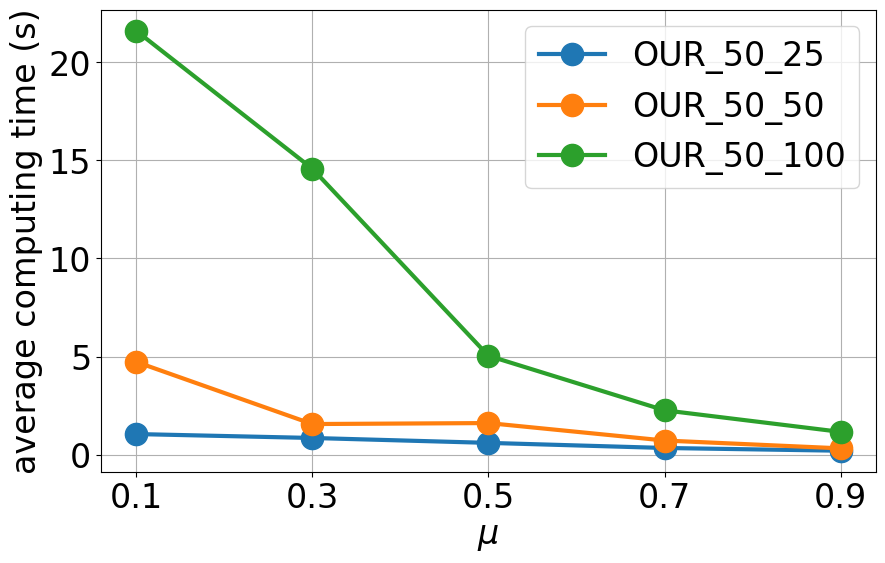}
        \caption{\textbf{HM14} dataset with $T=50$.}
    \end{subfigure}
\hfill
    \begin{subfigure}[b]{0.45\textwidth}
        \centering
        \includegraphics[width=\linewidth]{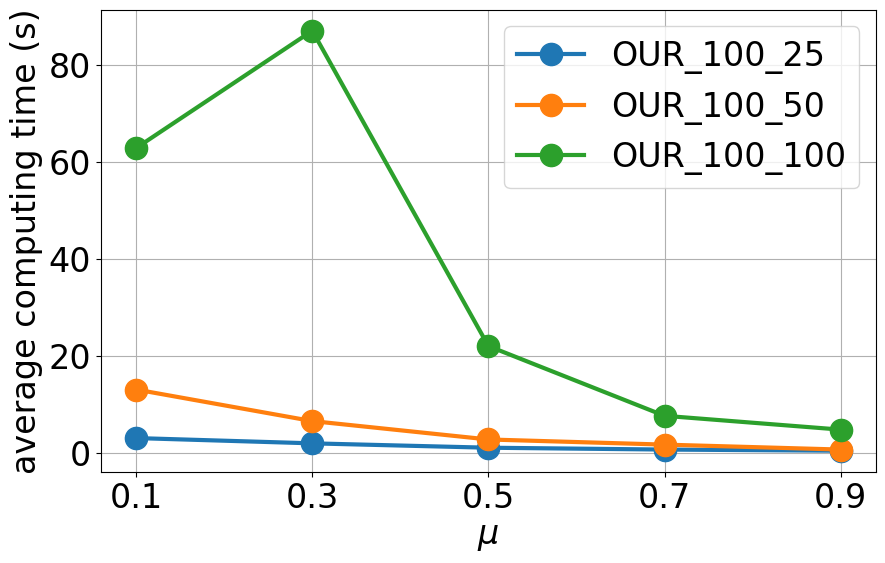}
        \caption{\textbf{HM14} dataset with $T=100$.}
    \end{subfigure}
    \caption{Average computing time of CP algorithm with different distribution of $\pmb{\sigma}$.}
    \label{fig:sigma}
\end{figure}

\begin{figure}[!httb]
    \centering
    \begin{subfigure}[b]{0.45\textwidth}
        \centering
        \includegraphics[width=\linewidth]{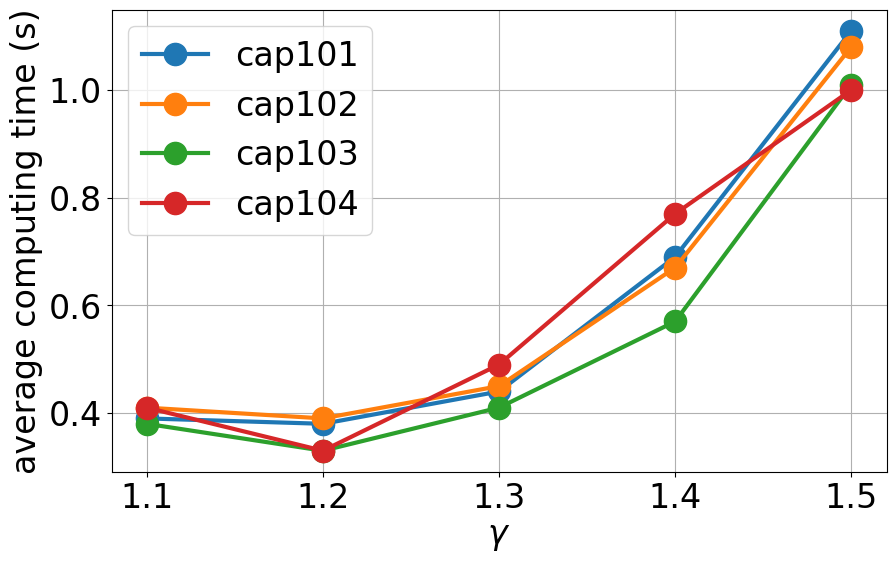}
        \caption{\textbf{ORlib} dataset with $(T,m) = (50,25)$.}
    \end{subfigure}
\hfill
    \begin{subfigure}[b]{0.45\textwidth}
        \centering
        \includegraphics[width=\linewidth]{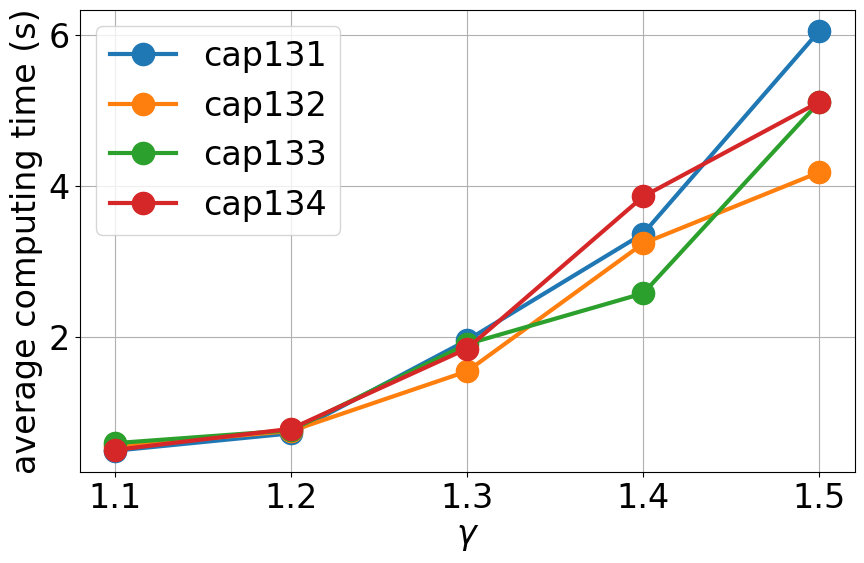}
        \caption{\textbf{ORlib} dataset with $(T,m) = (50,50)$.}
    \end{subfigure}
    \begin{subfigure}[b]{0.45\textwidth}
        \centering
        \includegraphics[width=\linewidth]{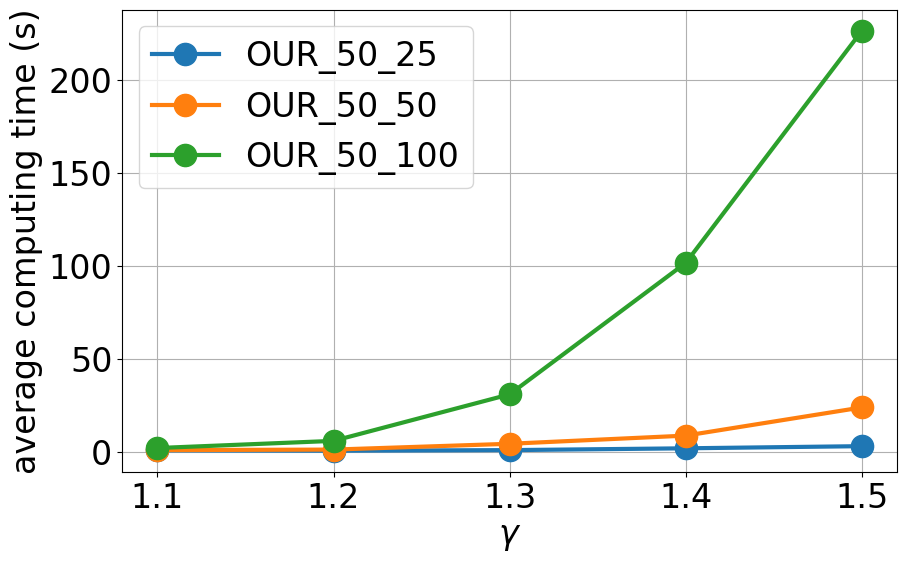}
        \caption{\textbf{HM14} dataset with $T=50$.}
    \end{subfigure}
\hfill
    \begin{subfigure}[b]{0.45\textwidth}
        \centering
        \includegraphics[width=\linewidth]{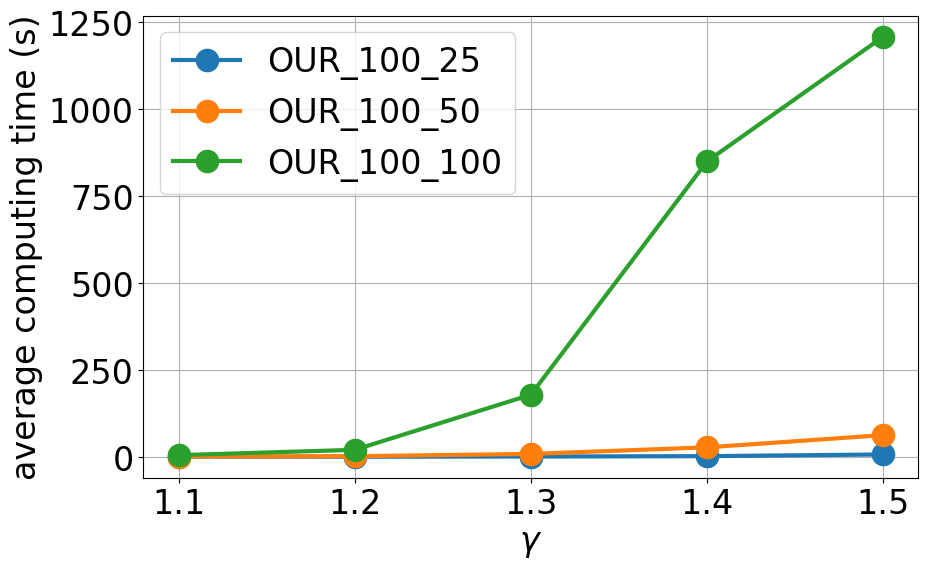}
        \caption{\textbf{HM14} dataset with $T=100$.}
    \end{subfigure}
    \caption{Average computing time of CP algorithm with different overlapping rates $\gamma$.}
    \label{fig:gamma}
\end{figure}

Since the dissimilarity parameters \(\pmb{\sigma}\) are drawn from a normal distribution with a mean \(\mu\), we vary the mean parameter within the set \(\{0.1, 0.3, 0.5, 0.7, 0.9\}\). For each pair of \((\mu, \gamma)\), we generate 10 instances for the experiments. Consequently, each parameter set now corresponds to 450 different instances. Figure \ref{fig:sigma} reports the average computing time of the \textbf{CP} algorithm on the selected instances when \(\mu\) varies from 0.1 to 0.9. The figure shows that the average computation time is almost inversely proportional to the dissimilarity parameters, i.e., higher \(\pmb{\sigma}\) values result in lower computing times. This result aligns with expectations, as when \(\pmb{\sigma}\) approaches 1, the CNL model becomes closer to the MNL model, making the objective function of the MCP-CNL more closer to a linear fractional program, which is simpler to solve.

To analyze the impact of the overlapping parameter, we vary \(\gamma\) within the set \(\{1.1, 1.2, 1.3, 1.4, 1.5\}\). For each pair of \((\gamma, r)\), we generate 10 instances for the experiments, resulting in a total of 450 instances for each parameter set. Figure \ref{fig:gamma} plots the average computing time as \(\gamma\) increases, showing that the computing time tends to increase as \(\gamma\) gets higher. In other words, instances require more time to solve with higher overlapping rates. This observation aligns with expectations, as higher \(\gamma\) implies that each location can be shared among more nests, resulting in a more complex objective function. It is worth noting that when \(\gamma = 1\), the CNL model collapses to an NL model where each location cannot belong to more than one nest. In this case, as shown in Proposition \ref{prop:NL-poly}, the MCP can be solved in polynomial time when there is only one customer type. Figure \ref{fig:gamma} also indicates that, in this situation, the algorithm can solve the instances very quickly.

Figures \ref{fig:sigma} and \ref{fig:gamma} together demonstrate that the MCP becomes more complex and difficult to solve as the overlapping rate \(\gamma\) increases or the dissimilarity parameter \(\pmb{\sigma}\) decreases. This generally aligns with the fact that the CNL model approaches its simpler instances (MNL or NL) when \(\gamma\) decreases or \(\pmb{\sigma}\) increases.

\subsection{Comparison of Outer-Approximation and Submodular Cuts}
\begin{table}[!ht]
    \centering
    \caption{Comparison Results for outer-approximation and submodular cuts in \textbf{B\&C}.}
    \label{tab:results-bc}
    \begin{tabular}{cccccccccc}
    \hline
        \multirow{2}{*}{Problem} & \multirow{2}{*}{$T$} & \multirow{2}{*}{$m$} & \multicolumn{2}{c}{\#Best} &  \multicolumn{2}{c}{\#Opt} &  \multicolumn{2}{c}{\#Time} \\ \cmidrule(lr){4-5}\cmidrule(lr){6-7}\cmidrule(lr){8-9}
        ~ & ~ & ~ & \multicolumn{1}{c}{\textbf{OA}} & \multicolumn{1}{c}{\textbf{OA+SC}} & \multicolumn{1}{c}{\textbf{OA}} & \multicolumn{1}{c}{\textbf{OA+SC}} & \multicolumn{1}{c}{\textbf{OA}} & \multicolumn{1}{c}{\textbf{OA+SC}} \\ \hline
        cap101 & 50 & 25 & \textbf{81} & \textbf{81} & \textbf{81} & \textbf{81} & \textbf{0.11} & \textbf{0.11} \\ 
        cap102 & 50 & 25 & \textbf{81} & \textbf{81} & \textbf{81} & \textbf{81} & 0.10 & \textbf{0.09} \\ 
        cap103 & 50 & 25 & \textbf{81} & \textbf{81} & \textbf{81} & \textbf{81} & \textbf{0.10} & \textbf{0.10} \\ 
        cap104 & 50 & 25 & \textbf{81} & \textbf{81} & \textbf{81} & \textbf{81} & \textbf{0.09} & \textbf{0.09} \\ 
        cap131 & 50 & 50 & \textbf{81} & \textbf{81} & \textbf{81} & \textbf{81} & \textbf{0.23} & 0.26 \\ 
        cap132 & 50 & 50 & \textbf{81} & \textbf{81} & \textbf{81} & \textbf{81} & \textbf{0.12} & 0.14 \\ 
        cap133 & 50 & 50 & \textbf{81} & \textbf{81} & \textbf{81} & \textbf{81} & \textbf{0.12} & \textbf{0.12} \\ 
        cap134 & 50 & 50 & \textbf{81} & \textbf{81} & \textbf{81} & \textbf{81} & \textbf{0.13} & 0.14 \\ 
        capa & 1000 & 100 & 77 & \textbf{80} & \textbf{76} & \textbf{76} & 1582.02 & \textbf{1389.03} \\ 
        capb & 1000 & 100 & \textbf{81} & \textbf{81} & \textbf{79} & \textbf{79} & 1296.73 & \textbf{1262.18} \\ 
        capc & 1000 & 100 & 68 & \textbf{71} & 48 & \textbf{52} & \textbf{2121.76} & 2155.58 \\ 
        OUR & 50 & 25 & \textbf{81} & \textbf{81} & \textbf{81} & \textbf{81} & 0.18 & \textbf{0.16} \\ 
        OUR & 50 & 50 & \textbf{81} & \textbf{81} & \textbf{81} & \textbf{81} & \textbf{0.18} & 0.19 \\ 
        OUR & 50 & 100 & \textbf{81} & \textbf{81} & \textbf{81} & \textbf{81} & \textbf{0.33} & 0.34 \\ 
        OUR & 100 & 25 & \textbf{81} & \textbf{81} & \textbf{81} & \textbf{81} & \textbf{0.31} & 0.34 \\ 
        OUR & 100 & 50 & \textbf{81} & \textbf{81} & \textbf{81} & \textbf{81} & \textbf{0.47} & 0.48 \\ 
        OUR & 100 & 100 & \textbf{81} & \textbf{81} & \textbf{81} & \textbf{81} & \textbf{2.58} & 2.60 \\ 
        OUR & 200 & 25 & \textbf{81} & \textbf{81} & \textbf{81} & \textbf{81} & \textbf{1.96} & 1.99 \\ 
        OUR & 200 & 50 & \textbf{81} & \textbf{81} & \textbf{81} & \textbf{81} & 2.47 & \textbf{2.01} \\ 
        OUR & 200 & 100 & \textbf{81} & \textbf{81} & \textbf{81} & \textbf{81} & \textbf{15.12} & 15.47 \\ 
        OUR & 400 & 25 & \textbf{81} & \textbf{81} & \textbf{81} & \textbf{81} & 12.42 & \textbf{7.72} \\ 
        OUR & 400 & 50 & \textbf{81} & \textbf{81} & \textbf{81} & \textbf{81} & 15.13 & \textbf{6.89} \\ 
        OUR & 400 & 100 & \textbf{81} & \textbf{81} & \textbf{81} & \textbf{81} & 134.97 & \textbf{113.13} \\ 
        OUR & 800 & 25 & \textbf{81} & \textbf{81} & \textbf{81} & \textbf{81} & 76.83 & \textbf{75.86} \\ 
        OUR & 800 & 50 & \textbf{81} & \textbf{81} & \textbf{81} & \textbf{81} & 159.61 & \textbf{135.40} \\ 
        OUR & 800 & 100 & 80 & \textbf{81} & \textbf{80} & 79 & 1205.19 & \textbf{1128.49} \\ 
        NYC & 2000 & 59 & 65 & \textbf{76} & 64 & \textbf{72} & 1944.93 & \textbf{511.92} \\ \hline\hline
        Average & ~ & ~ & 79.74	& 80.41	& 78.85 & 79.26 & 317.56 & \textbf{252.25}\\\hline
    \end{tabular}
\end{table}
In this section, we present experiments to assess the impact of outer-approximation and submodular cuts on the performance of the \textbf{CP} and \textbf{B\&C} approaches. We compare the performance of the \textbf{B\&C} and \textbf{CP} algorithms when using only outer-approximation cuts (denoted as \textbf{OA}) and when combining outer-approximation cuts with submodular cuts (denoted as \textbf{OA+SC}). Results for \textbf{CP} or \textbf{B\&C} with only submodular cuts are not included, as this combination generally performs poorly compared to those with only outer-approximation cuts or with both types of cuts \citep{Ljubic2018outer}.

We implemented these algorithms on the same instances used in our main comparison in Section \ref{sec:experiments}. The numerical results for the \textbf{B\&C} and \textbf{CP} algorithms are reported in Table \ref{tab:results-bc} and Table \ref{tab:results-cp}, respectively. For the \textbf{B\&C} algorithm, both the average number of best solutions and the number of instances solved to optimality increase when submodular cuts are added. Moreover, the submodular cuts improve the average computing time (252.25 seconds compared to 317.56 seconds without submodular cuts), especially for \textbf{NYC} dataset, as shown in Table \ref{tab:results-bc}.

For the \textbf{CP} algorithm, as presented in Table \ref{tab:results-cp}, submodular cuts help tighten the bounds during the convergence process, improving the discovery of best solutions, while solving to optimality is only slightly less efficient ($76.41\%$ compared to $77\%$ without submodular cuts). In terms of average running time, submodular cuts improve computing time for the \textbf{ORlib} dataset; however, adding additional cuts can lead to higher computing times, as observed with the \textbf{HM14} dataset.

\begin{table}[!ht]
    \centering
    \caption{Comparison results for outer-approximation and submodular cuts in \textbf{CP}.}
    \label{tab:results-cp}
    \begin{tabular}{ccccccccc}
    \hline
        \multirow{2}{*}{Problem} & \multirow{2}{*}{$T$} & \multirow{2}{*}{$m$} & \multicolumn{2}{c}{\#Best} &  \multicolumn{2}{c}{\#Opt} &  \multicolumn{2}{c}{\#Time} \\\cmidrule(lr){4-5}\cmidrule(lr){6-7}\cmidrule(lr){8-9}
        ~ & ~ & ~ & \multicolumn{1}{c}{\textbf{OA}} & \multicolumn{1}{c}{\textbf{OA+SC}} & \multicolumn{1}{c}{\textbf{OA}} & \multicolumn{1}{c}{\textbf{OA+SC}} & \multicolumn{1}{c}{\textbf{OA}} & \multicolumn{1}{c}{\textbf{OA+SC}} \\ \hline
        cap101 & 50 & 25 & \textbf{81} & \textbf{81} & \textbf{81} & \textbf{81} & \textbf{0.28} & 0.36 \\ 
        cap102 & 50 & 25 & \textbf{81} & \textbf{81} & \textbf{81} & \textbf{81} & \textbf{0.28} & 0.32 \\ 
        cap103 & 50 & 25 & \textbf{81} & \textbf{81} & \textbf{81} & \textbf{81} & \textbf{0.53} & 0.59 \\ 
        cap104 & 50 & 25 & \textbf{81} & \textbf{81} & \textbf{81} & \textbf{81} & \textbf{0.32} & 0.41 \\ 
        cap131 & 50 & 50 & \textbf{81} & \textbf{81} & \textbf{81} & \textbf{81} & \textbf{1.19} & 1.29 \\ 
        cap132 & 50 & 50 & \textbf{81} & \textbf{81} & \textbf{81} & \textbf{81} & \textbf{0.62} & 0.69 \\ 
        cap133 & 50 & 50 & \textbf{81} & \textbf{81} & \textbf{81} & \textbf{81} & \textbf{0.67} & 0.85 \\ 
        cap134 & 50 & 50 & \textbf{81} & \textbf{81} & \textbf{81} & \textbf{81} & \textbf{0.63} & 0.66 \\ 
        capa & 1000 & 100 & \textbf{81} & \textbf{81} & 74 & \textbf{76} & 1705.72 & \textbf{1615.53} \\ 
        capb & 1000 & 100 & 74 & \textbf{75} & \textbf{64} & 60 & 1885.85 & \textbf{1850.24} \\ 
        capc & 1000 & 100 & 56 & \textbf{62} & \textbf{31} & 26 & 3100.84 & \textbf{3099.91} \\ 
        OUR & 50 & 25 & \textbf{81} & \textbf{81} & \textbf{81} & \textbf{81} & \textbf{0.82} & 0.92 \\ 
        OUR & 50 & 50 & \textbf{81} & \textbf{81} & \textbf{81} & \textbf{81} & \textbf{1.16} & 1.38 \\ 
        OUR & 50 & 100 & \textbf{81} & \textbf{81} & \textbf{81} & \textbf{81} & \textbf{2.08} & 2.54 \\ 
        OUR & 100 & 25 & \textbf{81} & \textbf{81} & \textbf{81} & \textbf{81} & \textbf{1.28} & 2.49 \\ 
        OUR & 100 & 50 & \textbf{81} & \textbf{81} & \textbf{81} & \textbf{81} & \textbf{3.71} & 4.45 \\ 
        OUR & 100 & 100 & \textbf{81} & \textbf{81} & \textbf{81} & \textbf{81} & \textbf{18.73} & 21.41 \\ 
        OUR & 200 & 25 & \textbf{81} & \textbf{81} & \textbf{81} & \textbf{81} & 21.53 & \textbf{19.08} \\ 
        OUR & 200 & 50 & \textbf{81} & \textbf{81} & \textbf{81} & \textbf{81} & 6.81 & \textbf{6.42} \\ 
        OUR & 200 & 100 & \textbf{81} & \textbf{81} & \textbf{81} & \textbf{81} & 245.97 & \textbf{226.64} \\ 
        OUR & 400 & 25 & \textbf{81} & \textbf{81} & \textbf{81} & \textbf{81} & \textbf{18.45} & 18.76 \\ 
        OUR & 400 & 50 & \textbf{81} & \textbf{81} & \textbf{81} & \textbf{81} & \textbf{34.23} & 43.06 \\ 
        OUR & 400 & 100 & 79 & \textbf{81} & 78 & \textbf{79} & \textbf{536.33} & 538.85 \\ 
        OUR & 800 & 25 & \textbf{81} & \textbf{81} & \textbf{81} & \textbf{81} & \textbf{67.50} & 143.43 \\ 
        OUR & 800 & 50 & \textbf{81} & \textbf{81} & \textbf{81} & \textbf{81} & \textbf{168.64} & 189.47 \\ 
        OUR & 800 & 100 & \textbf{76} & \textbf{76} & \textbf{62} & 61 & \textbf{1852.07} & 1977.20 \\ 
        NYC & 2000 & 59 & \textbf{77} & 76 & \textbf{69} & 60 & \textbf{1394.81} & 1678.24 \\ \hline\hline
        Average & ~ & ~ & 79.41 & \textbf{79.70} & \textbf{77.00} & 76.41 & \textbf{410.04} & 423.90\\\hline
    \end{tabular}
\end{table}

\end{document}

\end{document}